\newtheorem{theorem}{Theorem}
\newtheorem{lemma}{Lemma}
\newtheorem{remark}{Remark}
\DeclareMathOperator{\sgn}{sgn}
\title{Marcinkiewicz Sampling Theorem for Orlicz Spaces}
\author{Aleksander Pawlewicz and Micha\l{} Wojciechowski}
\date{}
\def\blfootnote{\gdef\@thefnmark{}\@footnotetext}
\begin{document}

\maketitle
\begin{abstract}
In the article we generalize the Marcinkiewicz sampling theorem in the context of Orlicz spaces. We establish conditions under which sampling theorem holds in terms of restricted submultiplicativity and supermultiplicativity of an $N$-function $\varphi$, boundedness of the Hilbert transform and Matuszewska-Orlicz indices. In addition we give a new criterion for boundedness of Hilbert transform on Orlicz space.

\end{abstract}
\blfootnote{\textup{2020} \textit{Mathematics Subject Classification}: 46E30, 47B65, 42A05.}

\section{Introduction}
In this paper we deal with complex-valued functions defined on the one dimensional torus group $\mathbb{T}=\mathbb{R}/2\pi\mathbb{Z}$. Throughout the paper we will use the following notation
$$x_{n,k}=\frac{2\pi(n+k)}{2n+1},$$
for $k= -n, -(n-1), ..., n-1, n$.

In the paper we present a generalisation of the Marcinkiewicz sampling theorem in the context of Orlicz spaces. In its original statement, this theorem says that for every trigonometric polynomial of degree $n\in\mathbb{N}=\{1, 2, 3, ...\}$, that is the function
$$f_n(x)=\sum_{k=-n}^n a_k e^{ikx},$$
where $a_{-n}, a_{-(n-1)}, ..., a_{n-1}, a_n$ are complex numbers, $f_n:\mathbb{T}\rightarrow\mathbb{C}$, we have
\begin{eqnarray}
\left(\frac{1}{2n+1}\sum_{k=-n}^n\left|f_n\left(x_{n,k}\right)\right|^p\right)^{1/p}
\leq
3\cdot\left(\frac{1}{2\pi}\int_\mathbb{T}|f_n(x)|^p\,dx\right)^{1/p}
\end{eqnarray}
for every $1\leq p\leq +\infty$, and
\begin{eqnarray}
\left(\frac{1}{2\pi}\int_\mathbb{T}|f_n(x)|^p\,dx\right)^{1/p}
\leq
C_p\left(\frac{1}{2n+1}\sum_{k=-n}^n\left|f_n\left(x_{n,k}\right)\right|^p\right)^{1/p}
\end{eqnarray}
for every $1<p<+\infty$, where $C_p>0$ is a constant depending only on $p$. For $p=1$, as the example of Dirichlet kernel shows, the inequality $(2)$ is not true. The above statement with proofs can be found in \cite[Theorem 9 and 10, pages 12 and 13]{Mar}, \cite[Theorem 1 and 2, page 132]{MaZy} and \cite[Volume II, pages 28 and 29]{Zyg}. For more information on this subject see \cite[Section 4.5.3, pages 222 - 224]{Ma2} and the references given there.

\smallskip

We begin with basic definitions of the objects used in the paper. An $N$-function $\varphi$ is a convex function from $[0,\infty)$ to $[0,\infty)$ such that
$$\varphi(t)=\int_0^t \phi(x)\,dx,$$
where $\phi:[0,\infty)\rightarrow[0,\infty)$ is a non-decreasing, right-continuous, strictly positive on $(0,\infty)$ function, such that $\phi(0)=0$ and $\lim_{t\rightarrow+\infty}\phi(t)=+\infty$. Then we have
\begin{equation}\label{zaleznosc}
\varphi(t)=\int_0^t\phi(s)\,ds\leq t\phi(t)\leq \int_t^{2t}\phi(s)\,ds\leq \varphi(2t).
\end{equation}
We will sometimes assume that the $N$-function $\varphi$ satisfies the $\Delta_2$ condition, that is there exists a constant $D>0$ such that for every $x>0$ we have
$$\varphi(2x)\leq D\varphi(x).$$
More information on this condition can be found for example in the book \cite[Chapter I, \S4, page 23]{KrRu}.

Given an $N$-function $\varphi$ and a positive measure $\mu$ on $\mathbb{T}$, we define the Orlicz space $L^\varphi(\mu)$ as a Banach space of $\mu$-measurable functions $f$ such that the Luxemburg norm of the function $f$,
$$\left\Vert f\right\Vert_{L^\varphi(\mu)}=\inf\left\{\lambda>0: \int_\mathbb{T}\varphi\left(\frac{|f(x)|}{\lambda}\right)\,d\mu\leq 1\right\},$$
is finite. In this article we will deal with three particular cases of the measure $\mu$, namely the Haar measure on $\mathbb{T}$ and two families of discrete measures. The Orlicz spaces considered in the paper are: $L^\varphi=\left\{f \in L^1, \left\Vert f\right\Vert_{L^\varphi}<\infty\right\},$ and $\ell_n^\varphi$, $L^\varphi(\omega_n)$ -
the spaces of trigonometric polynomials of degree $n$. 

Norms on these spaces are
$$\left\Vert f\right\Vert_{L^\varphi}=\inf\left\{\lambda>0: \frac{1}{2\pi}\int_\mathbb{T}\varphi\left(\frac{|f(x)|}{\lambda}\right)\,dx\leq 1\right\},$$
$$\left\Vert f\right\Vert_{\ell_n^\varphi}=\inf\left\{\lambda>0: \sum_{k=-n}^n\varphi\left(\frac{\left|f\left(x_{n,k}\right)\right|}{\lambda}\right)\leq 1\right\},$$
and
$$
\left\Vert f\right\Vert_{L^\varphi(\omega_n)}
=
\inf\left\{\lambda>0: \frac{1}{2n+1}\sum_{k=-n}^n\varphi\left(\frac{\left|f\left(x_{n,k}\right)\right|}{\lambda}\right)\leq 1\right\}.
$$
Here the positive, discrete measure $\omega_n$ is defined as follows
$$\omega_n=\frac{1}{2n+1}\sum_{k=-n}^n\delta_{x_{n,k}},$$
where $\delta_x$ is the Dirac measure at the point $x\in\mathbb{T}$.

Using the above notation we can formulate the main theorem.
\begin{theorem}\label{glowne}
\begin{enumerate}
\item
Let the $N$-function $\varphi$ satisfy the normalizing condition $\varphi(1)=1$ and
$$\varphi(a)\varphi(b)\leq\varphi(Cab)$$ 
for some $C>1$ and every $a<1\leq ab<b$ ($\varphi$ is restricted supermultiplicative).
Then for every trigonometric polynomial $f_n$ of degree $n$ we have
\begin{equation}
\left\Vert f_n\right\Vert_{\ell_n^\varphi}
\leq
6C^2\varphi^{-1}(2n+1)\left\Vert f_n\right\Vert_{L^\varphi}.
\end{equation}
\item
Let the $N$-function $\varphi$ satisfy the $\Delta_2$ condition, normalizing condition $\varphi(1)=1$,
$$\varphi(Cab)\leq\varphi(a)\varphi(b)$$ 
for some $1>C>0$ and every $a<1\leq ab<b$ ($\varphi$ is restricted submultiplicative),
and assume that the Hilbert transform is a bounded operator on the space $L^\varphi$.
Then there exists a constant $C_\varphi>0$ such that for every trigonometric polynomial $f_n$ of degree $n$ we have
\begin{equation}
C C_\varphi\varphi^{-1}(2n+1)\left\Vert f_n\right\Vert_{L^\varphi}
\leq
2\left\Vert f_n\right\Vert_{\ell_n^\varphi}.
\end{equation}
\end{enumerate}
\end{theorem}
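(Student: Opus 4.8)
The plan is to split each inequality into a genuine analytic ingredient and a soft one governed only by convexity and the (sub/super)multiplicativity of $\varphi$. For part 1, the soft ingredient is essentially a pointwise-to-average estimate: the classical local inequality $|f_n(x_{0})|\le\frac{A_{0}(2n+1)}{2\pi}\int_{I}|f_n(x)|\,dx$ for a suitable arc $I\ni x_{0}$ of length of order $1/n$ --- the substance behind $(1)$ --- combined with Jensen's inequality applied arc by arc, gives
\[
\sum_{k=-n}^{n}\varphi\!\Big(\tfrac{|f_n(x_{n,k})|}{\lambda}\Big)\ \le\ A_{0}(2n+1)\cdot\frac{1}{2\pi}\int_{\mathbb T}\varphi\!\Big(\tfrac{A_{0}|f_n(x)|}{\lambda}\Big)\,dx .
\]
Assuming $\|f_n\|_{L^\varphi}\le1$, so that $\frac{1}{2\pi}\int_{\mathbb T}\varphi(|f_n|)\le1$ (and, by a Bernstein argument, $\|f_n\|_\infty\le A_1\varphi^{-1}(2n+1)$), I would take $\lambda$ of the form $A_2 C\varphi^{-1}(2n+1)$ and split the integral over the sets $\{|f_n|\ge\theta\}$ and $\{|f_n|<\theta\}$ for a threshold $\theta\asymp\lambda/\varphi^{-1}(2n+1)$: on the first, restricted supermultiplicativity (with $b=\varphi^{-1}(2n+1)$) yields $\varphi(A_0|f_n|/\lambda)\le\frac{1}{A_0(2n+1)}\varphi(|f_n|)$, and on the second, convexity together with $\varphi(1/\varphi^{-1}(2n+1))\le\varphi(C)/(2n+1)$ (again supermultiplicativity, at $a=1/\varphi^{-1}(2n+1)$, $b=\varphi^{-1}(2n+1)$) bounds the contribution by a constant. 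A final dilation of $\lambda$ turns the resulting $\sum_k\varphi(\cdot)\le\text{const}$ into $\le1$, and tracking the numerical constants gives $(4)$ with the factor $6C^{2}$. Note that neither $\Delta_2$ nor the Hilbert transform is used, in line with the hypotheses of part 1.

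For part 2 I would factor the inequality through $\|\cdot\|_{L^\varphi(\omega_n)}$. The first factor, $C\varphi^{-1}(2n+1)\|f_n\|_{L^\varphi(\omega_n)}\le2\|f_n\|_{\ell_n^\varphi}$, is the same soft mechanism, now on a finite sum and with \emph{sub}multiplicativity: normalising $\|f_n\|_{\ell_n^\varphi}$ so that $\sum_k\varphi(|f_n(x_{n,k})|/\lambda)\le1$ (hence every $|f_n(x_{n,k})|\le\lambda$), one estimates $\sum_k\varphi\big(C\varphi^{-1}(2n+1)|f_n(x_{n,k})|/(2\lambda)\big)$ by splitting the indices at $|f_n(x_{n,k})|=\lambda/\varphi^{-1}(2n+1)$: on the large ones, restricted submultiplicativity gives $\varphi(C\varphi^{-1}(2n+1)|f_n(x_{n,k})|/\lambda)\le\varphi(\varphi^{-1}(2n+1))\varphi(|f_n(x_{n,k})|/\lambda)=(2n+1)\varphi(|f_n(x_{n,k})|/\lambda)$, so their sum is $\le2n+1$; on the at most $2n+1$ small ones, each term is $<\varphi(C)<1$; the resulting $\le2(2n+1)$ is brought down to $2n+1$ by the dilation by $2$, which is exactly the $2$ on the right of $(5)$.

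The second factor, $C_\varphi\|f_n\|_{L^\varphi}\le\|f_n\|_{L^\varphi(\omega_n)}$, is the real content and uses $\Delta_2$ and the boundedness of $H$. I would reconstruct $f_n$ from its samples via the step function $h$ equal to $f_n(x_{n,k})$ on the arc $I_{n,k}=(x_{n,k}-\tfrac{\pi}{2n+1},x_{n,k}+\tfrac{\pi}{2n+1}]$: these arcs tile $\mathbb T$, so $\|h\|_{L^\varphi}=\|f_n\|_{L^\varphi(\omega_n)}$ exactly, while a short computation (using $\sum_k e^{i(l-m)x_{n,k}}=(2n+1)\mathbf{1}_{(2n+1)\mid(l-m)}$ and $|l-m|\le2n$) gives $\widehat h(m)=\gamma_m a_m$ for $|m|\le n$, with $\gamma_m=\operatorname{sinc}(\tfrac{\pi m}{2n+1})\in(\tfrac{2}{\pi},1]$ since $\tfrac{\pi|m|}{2n+1}<\tfrac{\pi}{2}$. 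Hence, with $S_j$ the $j$-th Dirichlet projection, $f_n=M(S_nh)$ where $M$ is the Fourier multiplier with symbol $\gamma_m^{-1}$ on $|m|\le n$, and $M=\sum_{j=0}^{n}c_jS_j$ with $c_j=\gamma_j^{-1}-\gamma_{j+1}^{-1}$ $(j<n)$, $c_n=\gamma_n^{-1}$, so $\sum_j|c_j|=2\gamma_n^{-1}-1<\pi-1$ by telescoping (as $\gamma_j^{-1}$ increases). Finally the $S_j$ are uniformly bounded on $L^\varphi$: from $S_j=M_{e_{-j}}(P_+-M_{e_{2j+1}}P_+M_{e_{-(2j+1)}})M_{e_j}$ with $M_{e_k}$ isometric modulations and $P_+=\tfrac12(I+iH)+\tfrac12E_0$ (with $E_0f=\widehat f(0)$) bounded by hypothesis --- here $\Delta_2$ makes ``$H$ bounded on $L^\varphi$'' a well-behaved assumption --- one gets $\|S_j\|_{L^\varphi\to L^\varphi}\le2\|P_+\|_{L^\varphi\to L^\varphi}$. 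Therefore $\|f_n\|_{L^\varphi}=\|M(S_nh)\|_{L^\varphi}\le4(\pi-1)\|P_+\|^{2}\,\|f_n\|_{L^\varphi(\omega_n)}$, i.e.\ $C_\varphi=(4(\pi-1)\|P_+\|^{2})^{-1}$ works; multiplying the two factors yields $(5)$.

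The hard part is exactly this last step: obtaining \emph{uniform-in-$n$} bounds for the reconstruction operator $M\circ S_n$. The decomposition $M=\sum_jc_jS_j$ with $\sum_j|c_j|$ bounded --- which relies on the monotonicity of $\operatorname{sinc}$ on $[0,\tfrac{\pi}{2}]$, and hence on $\tfrac{\pi n}{2n+1}<\tfrac{\pi}{2}$ --- reduces everything to the uniform $L^\varphi$-boundedness of the Dirichlet projections, and that is precisely where the hypothesis on $H$ is indispensable and where the theorem acquires its genuine analytic content; everything else, including the whole of part 1, is routine bookkeeping with convexity and the multiplicativity of $\varphi$.
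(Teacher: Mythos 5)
Your proposal is correct in its overall structure, but it takes a genuinely different route from the paper in two places, and the paper's route is both more modular and more economical.

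The paper factors \emph{both} inequalities through the intermediate norm $\|\cdot\|_{L^\varphi(\omega_n)}$. Part~1 is obtained by chaining Theorem~\ref{Simple} (Zygmund's inequality $\frac{1}{2n+1}\sum_k\varphi(|f_n(x_{n,k})|/3)\le\frac{1}{2\pi}\int\varphi(|f_n|)$, giving the factor $3$) with a purely discrete comparison $\|f\|_{\ell_n^\varphi}\le 2C^2\varphi^{-1}(2n+1)\|f\|_{L^\varphi(\omega_n)}$ (Theorem~\ref{porownanie}.1, proved by splitting the index set at $|f(x_{n,k})|=C\|f\|_{L^\varphi(\omega_n)}$ and applying restricted supermultiplicativity on each piece). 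The product $3\cdot 2C^2$ is exactly the $6C^2$. Your Part~1 instead argues directly from the local-average estimate plus Jensen, without passing through $L^\varphi(\omega_n)$. This is plausible in spirit, but it is considerably rougher: the constraints $a<1\le ab<b$ needed to invoke supermultiplicativity become entangled with the Bernstein constant $A_1$ and with your tiling constant $A_0$, so the ``tracking the numerical constants gives $6C^2$'' step is not actually routine --- your argument will produce a bound of the shape $\text{const}\cdot C\cdot\varphi^{-1}(2n+1)$ (with a constant depending on $A_0,A_1,A_2$), but not visibly the stated $6C^2$, which carries a second power of $C$ coming from the two uses of supermultiplicativity in the discrete--discrete comparison. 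To hit the stated constant you really want the paper's factorization.

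For Part~2 your first factor ($C\varphi^{-1}(2n+1)\|f_n\|_{L^\varphi(\omega_n)}\le 2\|f_n\|_{\ell_n^\varphi}$) is essentially the paper's Theorem~\ref{porownanie}.2 with the same $B_n/S_n$ split and the same use of submultiplicativity, just with a dilation-by-$2$ in place of the triangle inequality; fine. Your second factor, however, is a genuinely different proof from the paper's. The paper proves $\|f_n\|_{L^\varphi}\lesssim\|f_n\|_{L^\varphi(\omega_n)}$ by duality: it pairs $f_n$ against $g\in L^{\varphi^*}$, uses the discrete Parseval identity $\widehat{(f_nD_ng)}(0)=\frac{1}{2n+1}\sum_k(f_nD_ng)(x_{n,k})$, applies H\"older in $L^\varphi(\omega_n)\times L^{\varphi^*}(\omega_n)$, then Theorem~\ref{Simple} on the $\varphi^*$ side and the boundedness of $D_n$ on $L^{\varphi^*}$ (which follows from boundedness of $H$ on $L^\varphi$ by duality). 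You instead build the step function $h=\sum_kf_n(x_{n,k})\chi_{I_{n,k}}$, compute $\widehat h(m)=\operatorname{sinc}\bigl(\tfrac{\pi m}{2n+1}\bigr)a_m$ for $|m|\le n$, invert the multiplier by Abel summation $M=\sum_jc_jS_j$ with $\sum_j|c_j|=2\gamma_n^{-1}-1<\pi-1$, and reduce to uniform boundedness of the $S_j$ via modulations of the Riesz projection. This is correct and arguably more transparent --- it constructs the continuous function from its samples rather than dualizing --- but it is heavier, and it proves only the implication you need; the paper's route delivers the stronger Theorem~\ref{Hilbert_transform}, an actual equivalence between the sampling inequality and boundedness of $H$ on $L^\varphi$, which is one of the advertised contributions of the paper. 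In short: your plan is sound, your Part~2 reconstruction is an interesting alternative to duality, but your Part~1 argument does not cleanly reproduce the stated constant, and you forgo the equivalence that the paper's factorization provides for free.
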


\begin{remark}
Note that an $N$-function which is both restricted submultiplicative and restricted supmultiplicative is multiplicative, i.e. $\varphi(a)\varphi(b) \simeq \varphi(Cab)$. By \cite{Ma4} the only multiplicative functions are, up to equivalence, power functions.
\end{remark}

The paper is organized as follows. In Section 2 we formulate the auxiliary theorems used to prove Theorem \ref{glowne}. Section 3 contains the proof of auxiliary theorems. Section 4 is devoted to comparison of $\ell_n^\varphi$ and $L^\varphi(\omega_n)$ and the proof of Theorem \ref{glowne}.

\section{Auxiliary theorems}

In this section we will show the connections of the Marcinkiewicz sampling theorem for Orlicz spaces with probability measures with the Hilbert transform. For more information about this transform see \cite[Chapter 12, \S12.8, pages 90-99]{Ed}. Here we only state the definition. Let a distribution $H$ be given by a Fourier series 
\begin{equation}\label{dystrybucja}
H\backsim\sum_{n\in\mathbb{Z}}-i\sgn(n)e^{inx},
\end{equation}
where $\sgn(0)=0$ and $\sgn(n)=n/|n|$ for $n\neq 0$.
Then, the Hilbert transform of a function $f:\mathbb{T}\rightarrow\mathbb{C}$ is defined as the convolution of $f$ with the distribution $H$ in the principal value sense
$$H(f)(x)=H*f(x)=\lim_{\varepsilon\rightarrow 0}\frac{1}{2\pi}\int_{\varepsilon\leq|t|\leq\pi}f(x-t)\cot(t/2)\,dt.$$

Notice that, the form of Fourier series \eqref{dystrybucja} of the distribution $H$ means that the projection of function $f$ into the space of trigonometric polynomials of degree $n\in\mathbb{N}$ is given by the formula
$$
D_n(f)(x)=
e^{i(n+1)x}\left(\frac{I-iH}{2}\right)*\left(
e^{-2i(n+1)x}\left(\frac{I+iH}{2}\right)*\left(e^{i(n+1)x}f(x)\right)
\right).
$$
Here $I$ is the identity operator. The convolution operators $\frac{I-iH}{2}$ and $\frac{I+iH}{2}$ are the projections onto the space of functions which have only negative or positive Fourier coefficients respectively. Therefore, the norm of the operator $D_n$ satisfies:
\begin{equation}\label{rzut_norma}
||D_n||\leq \left(\frac{||I||+||H||}{2}\right)^2,
\end{equation}
where $||H||$ is the norm of the Hilbert transform.

In order to prove Theorem \ref{glowne}, first we will show the relations between norms on the spaces $L^\varphi$ and $L^\varphi(\omega_n)$. The next two theorems deal with this.

\begin{theorem}\label{Simple}
For every $n\in\mathbb{N}$, every N-function $\varphi$ and every trigonometric polynomial $f_n$ of degree $n$ we have
$$\left\Vert f_n\right\Vert_{L^\varphi(\omega_n)}\leq 3\left\Vert f_n\right\Vert_{L^\varphi}.$$
\end{theorem}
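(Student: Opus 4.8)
The plan is to reduce the assertion to a single pointwise estimate and then use nothing about $\varphi$ beyond convexity, which explains why the theorem needs no hypothesis on the $N$-function. Put $m=2n+1$ and note that the nodes $x_{n,k}$, $k=-n,\dots,n$, are exactly the $m$ equally spaced points $\tfrac{2\pi j}{m}$, $j=0,\dots,m-1$. Let $J_k$ be the arc of $\mathbb{T}$ of length $\tfrac{2\pi}{m}$ centred at $x_{n,k}$. The arcs $J_k$ then partition $\mathbb{T}$ up to a set of measure zero, and $\tfrac{m}{2\pi}|J_k|=1$, so the normalized restriction of Lebesgue measure to $J_k$ is a probability measure. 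The key ingredient is the pointwise (Nikolskii/Remez-type) inequality
$$|f_n(x_{n,k})|\ \le\ 3\cdot\frac{m}{2\pi}\int_{J_k}|f_n(t)|\,dt\qquad\text{for all }k,$$
i.e.\ the value of $f_n$ at a node is at most three times the mean of $|f_n|$ over the surrounding arc of length $\tfrac{2\pi}{m}$; this is precisely the estimate that lies behind the classical bound $(1)$.

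Granting this, the theorem is immediate. We may assume $f_n\not\equiv 0$, since a trigonometric polynomial vanishing almost everywhere vanishes identically. Fix any $\lambda>0$ with $\tfrac1{2\pi}\int_\mathbb{T}\varphi(|f_n(x)|/\lambda)\,dx\le 1$; by the definition of the Luxemburg norm it is enough to take $\lambda$ arbitrarily close to $\|f_n\|_{L^\varphi}$ from above. Dividing the pointwise estimate by $3\lambda$, applying the nondecreasing convex function $\varphi$, then Jensen's inequality on each $J_k$, and finally summing over $k$ and using that the $J_k$ tile $\mathbb{T}$, we get
\begin{align*}
\frac1m\sum_{k=-n}^n\varphi\!\left(\frac{|f_n(x_{n,k})|}{3\lambda}\right)
&\le \frac1m\sum_{k=-n}^n\frac{m}{2\pi}\int_{J_k}\varphi\!\left(\frac{|f_n(t)|}{\lambda}\right)dt\\
&= \frac1{2\pi}\int_\mathbb{T}\varphi\!\left(\frac{|f_n(t)|}{\lambda}\right)dt\ \le\ 1.
\end{align*}
Hence $\|f_n\|_{L^\varphi(\omega_n)}\le 3\lambda$, and letting $\lambda\downarrow\|f_n\|_{L^\varphi}$ finishes the argument. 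Taking $\varphi(t)=t^p$ in this computation returns the classical inequality $(1)$, a reassuring consistency check which also shows that the pointwise estimate carries all the content.

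It remains to establish the pointwise estimate, and this is the only real obstacle. The natural route is to compare $f_n(x_{n,k})$ with its own mean over $J_k$: for $t\in J_k$ one writes $f_n(x_{n,k})-f_n(t)=-\int_{x_{n,k}}^t f_n'(s)\,ds$, averages over $t\in J_k$, and absorbs the resulting error term using Bernstein's inequality $\|f_n'\|\le n\|f_n\|$ on an appropriate scale; a crude application loses a factor of size $\tfrac{2\pi n}{m}\approx\pi$, so some care (for instance subdividing $J_k$ into shorter sub-arcs) is needed. The genuine difficulty is not the existence of such a bound but the bookkeeping that brings the constant down to the stated value $3$; this is the step I would isolate as a separate lemma and where I expect essentially all the work to go. Everything else — the Jensen reduction, the partition of $\mathbb{T}$, and the manipulation of the Luxemburg norm — is routine.
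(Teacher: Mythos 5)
Your reduction via Jensen is correct and economical: if the pointwise estimate
$$
|f_n(x_{n,k})|\ \le\ 3\cdot\frac{m}{2\pi}\int_{J_k}|f_n(t)|\,dt,\qquad m=2n+1,
$$
were established, then applying the nondecreasing convex $\varphi$, Jensen's inequality on each arc, and summing over the tiling $\{J_k\}$ does give the modular inequality \eqref{Zygmund} and hence the Luxemburg-norm bound. The difficulty is that this pointwise inequality is the entire content of the theorem, and you neither prove it nor cite a reference for it in the form you need; you say yourself that it is ``the only real obstacle'' and that your proposed route (mean value theorem plus Bernstein) ``loses a factor of size $\approx\pi$'', with the missing ``bookkeeping'' deferred. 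As it stands, the argument therefore has a genuine gap precisely where all the work is.

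Two further cautions. First, your assertion that this pointwise estimate ``is precisely the estimate that lies behind the classical bound $(1)$'' is not substantiated: the classical proof (Zygmund, Vol.\ II, pp.\ 28--29, which the paper cites) establishes the \emph{modular} inequality \eqref{Zygmund} directly, and the constant $3$ there arises from a convexity split $\varphi\left(\tfrac{a+b+c}{3}\right)\le\tfrac13(\varphi(a)+\varphi(b)+\varphi(c))$ combined with a local mean-value/Bernstein argument and summation over the grid --- not obviously by passing through the single-point inequality you posit, with the same constant. Second, even if a pointwise estimate of this kind is true (which is plausible), deriving it with the sharp constant $3$ would itself require an argument of comparable length and delicacy to Zygmund's. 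The paper avoids all of this by quoting \eqref{Zygmund} directly; if you want a self-contained proof along your lines, you must supply the pointwise lemma with a complete proof, or alternatively prove the modular inequality directly as Zygmund does.
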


\begin{proof}
The proof follows immediately from the inequality
\begin{equation}\label{Zygmund}
\frac{1}{2n+1}\sum_{k=-n}^n\varphi\left(\frac{|f_n(x_{n,k})|}{3}\right)
\leq
\frac{1}{2\pi}\int_\mathbb{T}\varphi\left(|f_n(x)|\right)\,dx
\end{equation}
proved in the book \cite[Volume II, page 29]{Zyg}. 
\end{proof}

The next theorem imposes conditions on the function $\varphi$ that imply the reverse inequality.

\begin{theorem}\label{Hilbert_transform}
For every $N$-function $\varphi$ which satisfies the $\Delta_2$ condition, the following are equivalent:
\begin{enumerate}
\item there exists a constant $C_\varphi>0$ such that for every $n\in\mathbb{N}$ and every trigonometric polynomial $f_n$ of degree $n$ we have
$$\left\Vert f_n\right\Vert_{L^\varphi}\leq C_\varphi\left\Vert f_n\right\Vert_{L^\varphi(\omega_n)},$$
\item the Hilbert transform is a bounded operator on the space $L^\varphi$.
\end{enumerate}
\end{theorem}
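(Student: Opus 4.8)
The plan is to prove the two implications separately: $(2)\Rightarrow(1)$ is a ``reconstruction'' statement proved via the interpolation formula and the conjugate function, while $(1)\Rightarrow(2)$ is a ``transference'' statement which I would reduce to a growth condition on $\varphi$ by using the Dirichlet kernel as a test polynomial.

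For $(2)\Rightarrow(1)$ I would start from the Lagrange--Dirichlet interpolation formula for a polynomial of degree $n$,
$$f_n(x)=\frac{1}{2n+1}\sum_{k=-n}^n f_n(x_{n,k})\,D_n(x-x_{n,k}),\qquad D_n(t)=\frac{\sin\!\big((n+\tfrac12)t\big)}{\sin(t/2)},$$
together with the identity $D_n(x-x_{n,k})=(-1)^{n+k}\sin\!\big((n+\tfrac12)x\big)\big/\sin\!\big((x-x_{n,k})/2\big)$, which follows from $(n+\tfrac12)x_{n,k}\in\pi\mathbb Z$. Introduce the step function $g=\sum_k(-1)^{n+k}f_n(x_{n,k})\mathbf 1_{I_{n,k}}$, where $\mathbb T=\bigsqcup_k I_{n,k}$ is the partition into intervals of length $\tfrac{2\pi}{2n+1}$ centred at the nodes; then $\|g\|_{L^\varphi}=\|f_n\|_{L^\varphi(\omega_n)}$, and the same holds with the coefficients replaced by $|f_n(x_{n,k})|$. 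Using $\csc\theta=\cot\theta+\tan(\theta/2)$ to split the kernel, the ``diagonal'' term (the $k$ with $x\in I_{n,k}$) is bounded pointwise by $|f_n(x_{n,k})|$, because $|D_n|\le 2n+1$; the $\tan$-part contributes a constant multiple of $\|g\|_{L^1}\le C\|g\|_{L^\varphi}$; and the remaining ``off-diagonal'' sum $\tfrac1{2n+1}\sum_k(-1)^{n+k}f_n(x_{n,k})\cot\tfrac{x-x_{n,k}}2$ is recognised as a Riemann sum for the conjugate function $Hg(x)=\tfrac1{2\pi}\,\mathrm{p.v.}\!\int g(t)\cot\tfrac{x-t}2\,dt$. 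The three error contributions to control are: (i) the Riemann-sum error away from the singular interval, which is dominated by a discrete convolution of $(|f_n(x_{n,k})|)_k$ against a kernel lying in $\ell^1$ uniformly in $n$ (use $|\cot'(\theta)|\lesssim\theta^{-2}$), hence controlled in $L^\varphi$ by Young's inequality; (ii) the principal-value term over the node-interval and its two neighbours, which carry a $\log$-type profile on each $I_{n,k}$ --- here I would use the finite upper Matuszewska--Orlicz index furnished by $\Delta_2$ (i.e. $\varphi(st)\le Cs^q\varphi(t)$ for $s\ge1$) to show that multiplying a step function by such a profile changes its $L^\varphi$-norm only by a constant factor; (iii) $\|Hg\|_{L^\varphi}\le\|H\|_{L^\varphi\to L^\varphi}\|g\|_{L^\varphi}$, which is the only place where hypothesis $(2)$ enters. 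Collecting everything yields $\|f_n\|_{L^\varphi}\le C_\varphi\|f_n\|_{L^\varphi(\omega_n)}$.

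For $(1)\Rightarrow(2)$ I would not try to reverse the previous argument (it is not reversible pointwise), but instead test $(1)$ on $D_n$ itself. Since $D_n$ has degree $n$, vanishes at every node except $x_{n,-n}=0$, and $D_n(0)=2n+1$, one gets $\|D_n\|_{L^\varphi(\omega_n)}=\tfrac{2n+1}{\varphi^{-1}(2n+1)}$, so $(1)$ forces $\|D_n\|_{L^\varphi}\le C_\varphi\tfrac{2n+1}{\varphi^{-1}(2n+1)}$ for every $n$. On the other hand, from $|D_n(t)|\le C\min(2n+1,|t|^{-1})$ everywhere and $|D_n(t)|\ge c\min(2n+1,|t|^{-1})$ on a subset of $[-\pi,\pi]$ of proportional measure, the Luxemburg norm $\|D_n\|_{L^\varphi}$ is comparable to the least $\lambda$ with $\lambda^{-1}\!\int_{1/\lambda}^{n/\lambda}u^{-2}\varphi(u)\,du\lesssim 1$; putting $v=\varphi^{-1}(2n+1)$, the two bounds combine to give $\int_0^v u^{-2}\varphi(u)\,du\le C\,v^{-1}\varphi(v)$ for all large $v$, which is one of the standard characterizations of $p_\varphi>1$ for the lower Matuszewska--Orlicz index. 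Together with $q_\varphi<\infty$ (automatic from $\Delta_2$) this places both indices of $L^\varphi$ strictly between $1$ and $\infty$, whence $H$ is bounded on $L^\varphi$ by the classical theory of the Hilbert transform on rearrangement-invariant (in particular Orlicz) spaces.

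I expect the genuine work to be in step (ii) of $(2)\Rightarrow(1)$ --- the uniform-in-$n$ bookkeeping of the near-singular contributions, where the logarithmic profiles meet $\Delta_2$ --- and in making the comparison in $(1)\Rightarrow(2)$ quantitative, i.e. checking that $\|D_n\|_{L^\varphi}$ is really governed by the displayed integral with constants independent of $n$. The structural parts (the interpolation formula, the appearance of $Hg$, the passage through the indices) are routine once these two points are handled.
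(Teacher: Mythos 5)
Your proof is correct in spirit, and the two implications have a different relationship to the paper's argument.

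For $(1)\Rightarrow(2)$, your approach is essentially the paper's: both test the hypothesis on the Dirichlet kernel $D_n$, use that $D_n$ takes the value $2n+1$ at one node and $0$ at the others to compute $\|D_n\|_{L^\varphi(\omega_n)}$ exactly, and combine this with upper and lower bounds on the distribution function of $D_n$ (the paper's Lemma~\ref{Dirichlet}) to obtain the growth condition $\int_0^s \varphi(r)r^{-2}\,dr\lesssim \varphi(\sigma s)/s$. One thing you leave implicit that the paper handles carefully: testing on $D_n$ gives this inequality only along the discrete sequence of scales $(2n+1)/\|D_n\|_{L^\varphi}$, and one must show that successive terms are comparable (the paper's estimate~\eqref{lambda} and the passage around~\eqref{tylda}) before the inequality can be transferred to all $s>0$ using $\Delta_2$. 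You also invoke ``classical theory'' for the last step; the paper instead derives the companion condition~\eqref{big} directly from~\eqref{small} and $\Delta_2$ by a Fubini argument, and then runs a self-contained Marcinkiewicz-type interpolation against the weak $(1,1)$ and weak $(p,p)$ bounds for $H$, which keeps the proof elementary and makes the constant explicit.

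For $(2)\Rightarrow(1)$ you take a genuinely different route. You reconstruct $f_n$ from its samples via the Lagrange interpolation formula, peel off a bounded diagonal term and a $\tan$ error term, recognize the off-diagonal $\cot$ sum as a Riemann sum for $Hg$ with $g$ the associated step function, and control the Riemann error by an $\ell^1$-convolution estimate and the near-singular $\log$-profiles by the upper Matuszewska--Orlicz index coming from $\Delta_2$. This is Marcinkiewicz's original direct approach transplanted to the Orlicz setting, and I believe it can be pushed through: the key new ingredient --- that multiplying a step function by a per-interval $\log$-profile perturbs the $L^\varphi$-modular by a constant --- does indeed follow from $\varphi(st)\le Cs^q\varphi(t)$ for $s\ge1$ and $\int_0^1\log^q(1/u)\,du<\infty$. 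The paper proves this implication quite differently, by duality in the spirit of Zygmund's $L^p$ argument: it tests $f$ against $g\in L^{\varphi^*}$, replaces $g$ by its degree-$n$ projection $D_n(g)$ (bounded on $L^{\varphi^*}$ because $H$ is, by duality, bounded there), exploits the exact quadrature identity $\widehat{fD_n(g)}(0)=\frac{1}{2n+1}\sum_k(fD_n(g))(x_{n,k})$ valid for trigonometric polynomials of degree $\le 2n$, and closes with H\"older on $L^\varphi(\omega_n)\times L^{\varphi^*}(\omega_n)$ together with the easy estimate $\|D_n(g)\|_{L^{\varphi^*}(\omega_n)}\le 3\|D_n(g)\|_{L^{\varphi^*}}$ from Theorem~\ref{Simple}. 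The duality route is shorter and sidesteps all the Riemann-sum bookkeeping (and does not even need $\Delta_2$ in this direction), at the cost of being less constructive; your route gives a more hands-on picture of where the conjugate function enters, but requires the delicate uniform-in-$n$ control of the near-singular terms that you yourself flag as the genuine work.
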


It is well known that boundedness of the Hilbert transform can be expressed in terms of non-triviality of Boyd indices (which are equal to Matuszewska-Orlicz indices in case of Orlicz spaces - \cite[chapter 11]{Ma1}). The above theorem can be considered as one more equivalent condition for boundedness of the Hilbert transform.

\section{Proof of Theorem \ref{Hilbert_transform}}
We will need the following estimate for the Dirichlet kernel.
\begin{lemma}\label{Dirichlet}
Let 
$$D_n(x)=\sum_{k=-n}^ne^{ikx}=\frac{\sin\left({\frac{2n+1}{2}x}\right)}{\sin{\frac{x}{2}}}$$
be the $n$-th Dirichlet kernel for $n=0, 1, ...\,$. Moreover, let $\varphi$ be an $N$-function. Then we have
$$\int_0^\frac{3\cdot(2n+1)}{2\pi\lambda}\frac{\varphi(t)}{\lambda t^2}\,dt\leq \int_{\mathbb{T}}\varphi\left(\frac{|D_n(x)|}{\lambda}\right)\,dx \leq 4\pi\int_0^{\frac{2n+1}{\lambda}}\frac{\varphi(2t)}{\lambda t^2}\,dt,$$
for every $\lambda>0$.
\end{lemma}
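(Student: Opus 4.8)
The plan is to estimate $|D_n(x)|$ pointwise in terms of elementary functions and then change variables in the resulting integral. Recall that $|D_n(x)| = |\sin(\frac{2n+1}{2}x)|/|\sin(x/2)| \le 1/|\sin(x/2)|$, and also that on $[-\pi,\pi]$ one has the two-sided bound $\tfrac{|x|}{\pi}\le |\sin(x/2)|\cdot 2 \le |x|$, i.e. $\tfrac{2}{\pi}\cdot\tfrac{|x|}{2}\le|\sin(x/2)|\le\tfrac{|x|}{2}$; more precisely $|\sin(x/2)|\le |x|/2$ always and $|\sin(x/2)|\ge |x|/\pi$ on $|x|\le\pi$. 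Hence for the upper bound I would write, on each half of the torus, $|D_n(x)|\le \pi/|x|$, so that $\varphi(|D_n(x)|/\lambda)\le\varphi(\pi/(\lambda|x|))$; integrating $x$ from $0$ to $\pi$ (and doubling), then substituting $t=\pi/(\lambda|x|)$ — no, more convenient is $t$ proportional to $1/|x|$ — turns $dx$ into $dt/(\lambda t^2)$ up to constants, and the requirement that the inner argument of $\varphi$ not exceed $|D_n|$-type quantities is handled by the crude bound $|D_n(x)|\le 2n+1$ valid everywhere, which caps $1/|x|$ at $(2n+1)$ and thus sets the upper limit of the $t$-integral. Combining the two bounds (using $\varphi(s)\le\varphi(2s)$ from \eqref{zaleznosc} to absorb the numerical constant $\pi$ into a factor of $2$) and tracking constants should yield the claimed upper estimate $4\pi\int_0^{(2n+1)/\lambda}\varphi(2t)/(\lambda t^2)\,dt$.

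For the lower bound the main point is that $|D_n|$ is genuinely large on a definite portion of each period: on the interval where $\frac{2n+1}{2}x\in(\pi/6,\,5\pi/6)$ we have $|\sin(\frac{2n+1}{2}x)|\ge 1/2$, so there $|D_n(x)|\ge \tfrac12\cdot\tfrac{1}{|\sin(x/2)|}\ge \tfrac12\cdot\tfrac{2}{|x|}=\tfrac1{|x|}$ on $|x|\le\pi$ — but one must check that this ``good'' set is not too small relative to where $1/|x|$ ranges. More cleanly: for each value $s$ in the relevant range there is an $x$ near $2\pi s/(2n+1)$-type points where $|D_n(x)|\gtrsim 1/|x|$ on a set of $x$'s of proportional length. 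I would lower-bound $\int_\mathbb{T}\varphi(|D_n|/\lambda)\,dx$ by restricting to the arc $0<x<\pi$ where $|D_n(x)|\ge 1/|x|$ on a subset of proportional measure, then $\int\varphi(1/(\lambda|x|))\,dx$ with the substitution $t=1/(\lambda|x|)$ gives $\int \varphi(t)/(\lambda t^2)\,dt$, with the upper limit $3(2n+1)/(2\pi\lambda)$ coming from the smallest $|x|\sim 2\pi/(3(2n+1))$ below which the sign-pattern argument no longer applies. The constant $3/(2\pi)$ strongly suggests that the first place where $\sin(\frac{2n+1}{2}x)$ drops to zero after $x=0$, namely $x=2\pi/(2n+1)$, and the point $x=\pi/(2n+1)$ where it equals $1$, are the relevant landmarks; restricting to $x\in(0,\,3\pi/(2n+1))$ or so and using that $|D_n|\ge 1$ there on a subinterval of length comparable to the whole gives exactly this limit.

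The main obstacle is bookkeeping with the constants so that the endpoints $3(2n+1)/(2\pi\lambda)$ and $(2n+1)/\lambda$ come out exactly as stated, and — on the lower side — making the heuristic ``$|D_n|\gtrsim 1/|x|$ on a proportional set'' into a clean pointwise statement on an explicitly named interval rather than a measure-theoretic one, since we need the integral of $\varphi$ over a \emph{single} interval parametrized by the substitution $t=c/(\lambda|x|)$. Monotonicity and convexity of $\varphi$ (an $N$-function vanishing at $0$) are what let us discard the ``bad'' sets freely in the lower bound and inflate arguments by bounded factors in the upper bound via \eqref{zaleznosc}; no $\Delta_2$ assumption is needed here, consistent with the statement. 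Once the pointwise envelope bounds and the single change of variables $t\propto 1/(\lambda|x|)$ are in place, both inequalities follow by direct computation.
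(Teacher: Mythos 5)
Your upper bound works and is essentially the paper's upper bound turned inside out: the pointwise envelope $|D_n(x)|\le\min\bigl(2n+1,\pi/|x|\bigr)$ and the substitution $t\propto1/(\lambda|x|)$ reproduce the estimate of the distribution function $|\{|D_n|>\lambda t\}|\le 2\pi/(\lambda t)$ that the paper uses.

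The lower bound, however, has a genuine gap. You want a pointwise inequality $|D_n(x)|\gtrsim 1/|x|$ on a single interval to feed into a change of variables, but no such interval exists: $D_n$ vanishes at $x=2\pi k/(2n+1)$ for every $k\ne0$, so on \emph{every} interval of length $\gtrsim 2\pi/(2n+1)$ there are points where $|D_n(x)|$ is arbitrarily small. The set where $|D_n(x)|\ge c/x$ is genuinely a union of short subintervals (roughly one third of each period of $\sin\bigl(\tfrac{2n+1}{2}x\bigr)$), and you have not said how to aggregate those contributions. Your concrete fallback, ``restrict to $x\in(0,\,3\pi/(2n+1))$ or so,'' is actually aimed in the wrong direction: under $t=1/(\lambda x)$ the range $t\in\bigl(0,\,3(2n+1)/(2\pi\lambda)\bigr)$ corresponds to $x>2\pi/\bigl(3(2n+1)\bigr)$, so the lower bound must collect contributions from \emph{all} scales of $x$ away from zero, not from a short arc near $x=0$; restricting near $x=0$ produces only the large-$t$ part of the integral and cannot give the small-$t$ part at all.

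The paper sidesteps this entirely with the layer-cake formula. It writes
$\int_{\mathbb{T}}\varphi(|D_n|/\lambda)\,dx=\int_0^{(2n+1)/\lambda}\phi(t)\,|\{|D_n|>\lambda t\}|\,dt$,
replaces $\phi(t)$ by $\varphi(t)/t$ or $\varphi(2t)/t$ via \eqref{zaleznosc}, and then estimates the distribution function: from above by $2\pi/(\lambda t)$ (your envelope, rephrased), and from below by $1/(\lambda t)$ for $t<3(2n+1)/(2\pi\lambda)$, the latter obtained by intersecting $\{x<1/(\lambda t)\}$ with the set where $|\sin\bigl(\tfrac{2n+1}{2}x\bigr)|\ge\tfrac12$, which occupies a fixed fraction of every period. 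The distribution-function bound automatically sums the good subintervals of all the periods contained in $[0,\,1/(\lambda t))$, which is exactly the aggregation step your plan is missing. To repair your argument you would need either to pass through the distribution function anyway, or to split $[0,\pi)$ into the periods $I_k$, exhibit on each a good subinterval $J_k\subset I_k$ with $|J_k|\ge\tfrac13|I_k|$ and $|D_n(x)|\ge 1/x$ on $J_k$, and then compare $\int_{J_k}$ to $\int_{I_k}$ using that $1/x$ varies by a bounded factor on $I_k$; this works but is strictly more bookkeeping than the paper's approach.

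One more small point: $\varphi(2t)\ge 2\varphi(t)$ for a convex $\varphi$ with $\varphi(0)=0$, not merely $\varphi(2t)\ge\varphi(t)$, and the paper uses the factor $\varphi(t)/t\le\phi(t)\le\varphi(2t)/t$ from \eqref{zaleznosc} rather than generic absorption; when you fill in your upper bound you will want to use that displayed two-sided estimate on $\phi$ to land on $\varphi(2t)$ exactly as stated.
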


\begin{proof}
We have
\begin{equation}\label{part_integration}
\int_{\mathbb{T}}\varphi\left(\frac{|D_n(x)|}{\lambda}\right)\,dx
=
\int_0^\frac{2n+1}{\lambda}\phi(t)|\{x\in\mathbb{T}:|D_n(x)|>\lambda t\}|\,dt
\end{equation}
and, by \eqref{zaleznosc}, 
\begin{equation}\label{part_integration_2}
\frac{\varphi(t)}{t}\leq \phi(t) \leq \frac{\varphi(2t)}{t},
\end{equation}
so we only need to estimate the value $|\{x\in\mathbb{T}:|D_n(x)|>t\}|$. We immediately get
\begin{equation}\label{upper}
\begin{aligned}
|\{x\in\mathbb{T}:|D_n(x)|>\lambda t\}| 
&=
\left|\left\{x\in\mathbb{T}: \left|\frac{\sin\left({\frac{2n+1}{2}x}\right)}{\sin{\frac{x}{2}}}\right|>\lambda t\right\}\right| \\
&=
2\left|\left\{x\in[0,\pi): \left|\frac{\sin\left({\frac{2n+1}{2}x}\right)}{\sin{\frac{x}{2}}}\right|>\lambda t\right\}\right| \\
&\leq
2\left|\left\{x\in[0,\pi): \left|\frac{1}{\frac{x}{\pi}}\right|>\lambda t\right\}\right| \\
&=
\frac{2\pi}{\lambda t}.
\end{aligned}
\end{equation}
For the lower estimate we calculate in this way
\begin{equation}\label{lower}
\begin{aligned}
|\{x\in\mathbb{T}:|D_n(x)|>\lambda t\}| 
&=
2\left|\left\{x\in[0,\pi): \left|\sin\left({\frac{2n+1}{2}x}\right)\right|>\lambda t\sin{\frac{x}{2}}\right\}\right| \\
&\geq
2\left|\left\{x\in[0,\pi): \left|\sin\left({\frac{2n+1}{2}x}\right)\right|\geq \frac{1}{2}>\lambda t\frac{x}{2}\right\}\right| \\
&\geq
\frac{1}{\lambda t},
\end{aligned}
\end{equation}
for $t<\frac{3\cdot(2n+1)}{2\pi\lambda}$. By putting estimates \eqref{part_integration}, \eqref{part_integration_2}, \eqref{upper} and \eqref{lower} together, we get
$$
\int_0^\frac{3\cdot(2n+1)}{2\pi\lambda} \frac{\varphi(t)}{\lambda t^2}\,dt
\leq
\int_{\mathbb{T}}\varphi\left(\frac{|D_n(x)|}{\lambda}\right)\,dx
\leq
2\pi\int_0^\frac{2n+1}{\lambda} \frac{\varphi(2t)}{\lambda t^2}\,dt.
$$ 
\end{proof}

\begin{remark}
Notice that Lemma \ref{Dirichlet} implies also that 
\begin{equation}\label{lambda}
||D_n||_{L^\varphi}\leq 4\pi ||D_{n+1}||_{L^\varphi}
\end{equation}
for $n=0, 1, ...\,$.
\end{remark}

The next lemma is in fact the most important in our proof.

\begin{lemma}\label{estimate}
Let an N-function $\varphi$ satisfy the $\Delta_2$ condition. Assume also that there exists a constant $C_\varphi>0$ such that for every $n\in\mathbb{N}$ and every trigonometric polynomial $f_n$ of degree $n$ we have
\begin{equation}\label{Marcinkiewicz}
\left\Vert f_n\right\Vert_{L^\varphi}\leq C_\varphi\left\Vert f_n\right\Vert_{L^\varphi(\omega_n)}.
\end{equation}
Then there exist constants $\sigma>0$, $\gamma>0$ and $p>1$ such that for every $s>0$ we have
\begin{equation}\label{small}
\frac{\sigma s}{\varphi(\sigma s)}\int_0^s\frac{\varphi(r)}{r^2}\,dr\leq \sigma,
\end{equation}
and 
\begin{equation}\label{big}
\frac{(\gamma s)^p}{\varphi(\gamma s)}\int_s^\infty\frac{\varphi(r)}{r^{p+1}}\,dr\leq \gamma^p.
\end{equation}
\end{lemma}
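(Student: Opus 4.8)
The plan is to probe the hypothesis \eqref{Marcinkiewicz} with the Dirichlet kernels $D_n$, for which both norms can be computed almost explicitly, and then combine the outcome with the lower estimate of Lemma~\ref{Dirichlet}; the bound \eqref{big}, by contrast, will turn out to follow from the $\Delta_2$ condition alone. First I would compute the sampled norm of $D_n$. Since $x_{n,-n}=0$ while $D_n$ vanishes at every other node $x_{n,k}$, $k\neq -n$, and $D_n(0)=2n+1$, the condition defining $\left\Vert D_n\right\Vert_{L^\varphi(\omega_n)}$ collapses to $\tfrac{1}{2n+1}\varphi\big(\tfrac{2n+1}{\lambda}\big)\le 1$, so that
\[\left\Vert D_n\right\Vert_{L^\varphi(\omega_n)}=\frac{2n+1}{\varphi^{-1}(2n+1)}.\]
Feeding this into \eqref{Marcinkiewicz} gives $\left\Vert D_n\right\Vert_{L^\varphi}\le\lambda_n:=C_\varphi\,\tfrac{2n+1}{\varphi^{-1}(2n+1)}$, hence $\int_{\mathbb{T}}\varphi(|D_n|/\lambda_n)\,dx\le 2\pi$. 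Applying the lower bound of Lemma~\ref{Dirichlet} with $\lambda=\lambda_n$ and writing $M_n:=\varphi^{-1}(2n+1)$ (so $\varphi(M_n)=2n+1$ and $\tfrac{2n+1}{\lambda_n}=\tfrac{M_n}{C_\varphi}$), it rearranges to
\[\frac{M_n}{\varphi(M_n)}\int_0^{3M_n/(2\pi C_\varphi)}\frac{\varphi(t)}{t^{2}}\,dt\le 2\pi C_\varphi.\]
With $s_n:=\tfrac{3M_n}{2\pi C_\varphi}$ this is exactly \eqref{small} at the single point $s=s_n$, once $\sigma$ is taken to be a suitable multiple of $C_\varphi$.

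To pass from the discrete set $\{s_n\}_{n\ge 1}$ to all of $(0,\infty)$, the key remark is that consecutive scales are comparable: by convexity $\varphi(2M_n)\ge 2\varphi(M_n)=2(2n+1)>2n+3=\varphi(M_{n+1})$ for $n\ge 1$, so $s_{n+1}/s_n=M_{n+1}/M_n<2$. I would first strengthen the estimate at the $s_n$ to $\int_0^{s_n}\varphi(t)t^{-2}\,dt\le D^{-1}\varphi(\sigma s_n)/s_n$ by enlarging $\sigma$ to a large enough power of $2$ times $C_\varphi$ and invoking $\varphi(2u)\ge 2\varphi(u)$; then, for $s\in[s_n,s_{n+1}]$, the monotonicity of $t\mapsto\int_0^t\varphi(r)r^{-2}\,dr$ and of $u\mapsto u/\varphi(u)$, together with $\Delta_2$ (which gives $\varphi(\sigma s_{n+1})\le\varphi(2\sigma s_n)\le D\varphi(\sigma s_n)$), recover \eqref{small} with constant exactly $\sigma$ for every $s\ge s_1$. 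The range $0<s<s_1$ needs separate handling; there only the behaviour of $\varphi$ near the origin enters, so it is immaterial for the sequel — in which only the Matuszewska--Orlicz indices at infinity are used — and it can be arranged by replacing $\varphi$, if necessary, with an $N$-function agreeing with it near infinity and comparable to a power near $0$, for which $\int_0^s\varphi(r)r^{-2}\,dr\simeq\varphi(s)/s$ and \eqref{small} is immediate.

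For \eqref{big} I would not use the sampling inequality at all: the $\Delta_2$ condition alone suffices. Fix $p>1$ with $2^{p}>D$. Splitting $\int_s^\infty\varphi(r)r^{-p-1}\,dr$ over the dyadic shells $[2^{k}s,2^{k+1}s]$, bounding $\varphi(r)\le\varphi(2^{k+1}s)\le D^{\,k+1}\varphi(s)$ on each and summing the geometric series yields $\int_s^\infty\frac{\varphi(r)}{r^{p+1}}\,dr\le\frac{D}{1-D2^{-p}}\cdot\frac{\varphi(s)}{s^{p}}$ for every $s>0$. Choosing $\gamma=2^{m}$ with $2^{m}\ge\frac{D}{1-D2^{-p}}$ and using $\varphi(\gamma s)\ge 2^{m}\varphi(s)$ converts this into $\frac{(\gamma s)^{p}}{\varphi(\gamma s)}\int_s^\infty\frac{\varphi(r)}{r^{p+1}}\,dr\le\gamma^{p}$, which is \eqref{big}.

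The step I expect to be the real obstacle is the interpolation for \eqref{small}: the sampling inequality delivers information only at the scales $s_n\gtrsim\varphi^{-1}(3)$, so turning it into a clean bound valid for \emph{all} $s>0$ — and in particular the small-$s$ range, which the sampling inequality cannot reach and which is governed entirely by the behaviour of $\varphi$ near the origin, so that there one must argue purely from $\Delta_2$ under a harmless assumption on $\varphi$ near $0$ — is the genuinely delicate bookkeeping. Obtaining the estimate at the scales $s_n$, and the $\Delta_2$ proof of \eqref{big}, are by comparison routine.
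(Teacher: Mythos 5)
Your proposal for \eqref{small} follows essentially the same route as the paper: test the Marcinkiewicz inequality on the Dirichlet kernels $D_n$, for which the sampled norm is the explicit number $(2n+1)/\varphi^{-1}(2n+1)$, feed the resulting bound on $\lVert D_n\rVert_{L^\varphi}$ into the lower estimate of Lemma~\ref{Dirichlet}, and then interpolate from the discrete scales to all $s$ using $\Delta_2$ together with the fact that consecutive scales are within a factor of two of each other. The only cosmetic difference is how the latter fact is obtained: you derive $M_{n+1}/M_n<2$ directly from convexity of $\varphi$ applied to $M_n=\varphi^{-1}(2n+1)$, whereas the paper uses the kernel estimate $\lVert D_{n-1}\rVert_{L^\varphi}\le 4\pi\lVert D_n\rVert_{L^\varphi}$ from the Remark after Lemma~\ref{Dirichlet}. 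Both work; yours is arguably cleaner.

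For \eqref{big} you take a genuinely different and simpler route. The paper derives \eqref{big} from \eqref{small}: it bounds $\varphi(r)/r^{p+1}$ by $r^{-p}\int_0^r\varphi(x)x^{-2}\,dx$ (via \eqref{zaleznosc} and $\Delta_2$), splits $\int_0^r=\int_0^s+\int_s^r$, applies \eqref{small} to the first piece and Fubini to the second, and then absorbs the resulting $\int_s^\infty\varphi(r)r^{-p-1}\,dr$ term back to the left by taking $p=2D$. You instead observe that \eqref{big} already follows from $\Delta_2$ alone: a dyadic decomposition of $(s,\infty)$ gives $\int_s^\infty\varphi(r)r^{-p-1}\,dr\le\frac{D}{1-D2^{-p}}\varphi(s)/s^p$ once $2^p>D$, and then $\varphi(2^m u)\ge 2^m\varphi(u)$ upgrades $\varphi(s)$ to $\varphi(\gamma s)$ with the right constant. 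Your computation is correct, shorter, avoids the (slightly fragile) inversion of the inequality in the paper, and also makes transparent that $p$ can be taken just above $\log_2 D$ rather than as large as $2D$. This is a genuine improvement.

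You are also right to flag the small-$s$ range for \eqref{small} as the delicate point. The paper's interpolation implicitly assumes that when $n$ is chosen minimal with $s/2\le(2n+1)/\lambda_n$, one also has $s/2\ge(2n-1)/\lambda_{n-1}$; this requires $s\ge 2/\lambda_0=2\varphi^{-1}(1)$, so the argument as written only covers $s$ above a fixed threshold. The sampling hypothesis is scale-invariant but, in the Dirichlet-kernel test, only probes $\varphi$ at the large arguments $M_n/C_\varphi$, so it gives no information about $\int_0^s\varphi(r)r^{-2}\,dr$ for small $s$ (indeed one can write down an $N$-function satisfying $\Delta_2$, equal to a power at infinity — hence satisfying the sampling hypothesis — for which that integral even diverges near $0$). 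Your proposed fix, replacing $\varphi$ near the origin by an equivalent $N$-function with power-like behaviour there, is the right move and is harmless because only the behaviour of $\varphi$ at infinity enters the Orlicz norms on the finite-measure spaces involved; but it would be worth spelling out one sentence justifying that the modification does not disturb the sampling hypothesis. In the paper this point is left implicit, so your write-up is, if anything, more careful here.
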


\begin{remark}
Conditions \eqref{small} and \eqref{big} in the context of Marcinkiewicz interpolation theorem were known before and can be found, for example, in the papers \cite[Chapter 12, Theorem 4.22, page 116]{Zyg}, \cite[Page 161]{Ma3}, \cite[Chapter 11, page 89]{Ma1}, \cite[Theorem, page 361; Example 3, page 363]{Cia}.
\end{remark}

\begin{remark}
As the proof of Theorem \ref{Hilbert_transform} shows, the conditions \eqref{small} and \eqref{big} imply that every sublinear operator which is of weak types $(1,1)$ and $(p,p)$ is bounded on the space $L^\varphi$.
\end{remark}

\begin{proof}[Proof of Lemma \ref{estimate}] $\,$\\
 Consider the $n$-th Dirichlet kernel $D_n$ (trigonometric polynomial of degree $2n+1$). Let $\lambda_n=||D_n||_{L^\varphi}$ for $n=0, 1, ...\,$. Then by \eqref{Marcinkiewicz} we get
\begin{equation}\label{3to1}
\begin{aligned}
\frac{1}{2\pi}\int_\mathbb{T}\varphi\left(\frac{|D_n(x)|}{\lambda_n}\right)\,dx
\leq
1
&\leq
\frac{1}{2n+1}\sum_{k=-n}^n\varphi\left(\frac{C_\varphi |D_n(x_{n,k})|}{\lambda_n}\right) \\
&=
\frac{\varphi\left(C_\varphi \frac{2n+1}{\lambda_n}\right)}{2n+1},
\end{aligned}
\end{equation}
because
$D_n(x_{n,-n})=2n+1 \mbox{ and } D_n(x_{n,k})=0 \mbox{ for } k=-(n-1), -(n-2), ..., n-1, n$.

By Lemma \ref{Dirichlet} we get
$$
\frac{1}{2\pi}\int_0^\frac{2n+1}{\lambda_n}\frac{\varphi(2t)}{\lambda_n t^2}\,dt
\leq
\frac{1}{2\pi}\int_{\mathbb{T}}\varphi\left(\frac{|D_n(x)|}{\lambda_n}\right)\,dx
\leq
\frac{\varphi\left(C_\varphi \frac{2n+1}{\lambda_n}\right)}{2n+1}.
$$
The above estimates can be rewritten in the following way
\begin{equation}\label{posrednie}
\begin{aligned}
\frac{C_\varphi\frac{2n+1}{\lambda_n}}{\varphi\left(C_\varphi\frac{2n+1}{\lambda_n}\right)}\int_0^\frac{2n+1}{\lambda_n}\frac{\varphi(2t)}{t^2}\,dt
\leq
2\pi C_\varphi.
\end{aligned}
\end{equation}

On the other hand, by \eqref{3to1} we have
$$
1
\leq
\frac{\varphi\left(C_\varphi \frac{2n+1}{\lambda_n}\right)}{2n+1},
$$
which implies that
$$
\frac{\varphi^{-1}(2n+1)}{C_\varphi}\leq\frac{2n+1}{\lambda_n}.
$$
This means that the value $\frac{2n+1}{\lambda_n}$ tends to infinity as $n\rightarrow\infty$.
Now let $s>0$ and let $n$ be the smallest natural number such that
$$
\frac{s}{2}\leq\frac{2n+1}{\lambda_n}.
$$
By the above definition 
$$
\frac{s}{2}\geq \frac{2n-1}{\lambda_{n-1}}.
$$
Then by \eqref{lambda} and the inequality above,
\begin{equation}\label{tylda}
\frac{2n+1}{\lambda_n}\leq4\pi \cdot\frac{2n+1}{\lambda_{n-1}}=4\pi \cdot\frac{2n-1}{\lambda_{n-1}}\cdot\frac{2n+1}{2n-1} < 2\pi\cdot\frac{2n+1}{2n-1}\cdot s \leq 21s,
\end{equation}
for every $n=1, 2, ...\,$.
Then, by $\Delta_2$ condition, 
$$\frac{\varphi(C_\varphi\frac{2n+1}{\lambda_n})}{\varphi(C_\varphi s)}\leq C,$$
and we have
\begin{equation}
\label{pomocnicze}
\begin{aligned}
2\frac{C_\varphi s}{\varphi(C_\varphi s)}\int_0^s\frac{\varphi(t)}{t^2}\,dt
&=
\frac{C_\varphi s}{\varphi(C_\varphi s)}\int_0^{s/2}\frac{\varphi(2t)}{t^2}\,dt \\
&\leq
\frac{\varphi(C_\varphi\frac{2n+1}{\lambda_n})}{\varphi(C_\varphi s)}
\frac{2C_\varphi\frac{2n+1}{\lambda_n}}{\varphi(C_\varphi\frac{2n+1}{\lambda_n})}\int_0^\frac{2n+1}{\lambda_n}\frac{\varphi(2t)}{t^2}\,dt \\
&\leq
C\frac{2C_\varphi\frac{2n+1}{\lambda_n}}{\varphi(C_\varphi\frac{2n+1}{\lambda_n})}\int_0^\frac{2n+1}{\lambda_n}\frac{\varphi(2t)}{t^2}\,dt \\
&\leq
C 4\pi C_\varphi,
\end{aligned}
\end{equation}
by the estimate \eqref{posrednie}.
Now setting $\sigma=2\pi C C_\varphi$
and using the fact, that $\varphi(\alpha t)\geq\alpha\varphi(t)$ for every convex, increasing function $\varphi$ such that $\varphi(0)=0$ and $\alpha\geq 1$ , $t\geq 0$,
we have
\begin{eqnarray*}
\frac{\sigma s}{\varphi(\sigma s)}\int_0^s\frac{\varphi(t)}{t^2}\,dt
&=&
\frac{2\pi C C_\varphi s}{\varphi(2\pi C C_\varphi s)}\int_0^s\frac{\varphi(t)}{t^2}\,dt \\
&\leq&
\frac{2\pi C C_\varphi s}{2\pi C\varphi(C_\varphi s)}\int_0^s\frac{\varphi(t)}{t^2}\,dt \\
&=&
\frac{C_\varphi s}{\varphi(C_\varphi s)}\int_0^s\frac{\varphi(t)}{t^2}\,dt \\
&\leq&
2\pi C C_\varphi = \sigma.
\end{eqnarray*}
In the last step we used estimate \eqref{pomocnicze}. So the inequality \eqref{small} follows. 

Now let us turn to the inequality \eqref{big}.  
Let $p>1$ and $s>0$. Then we have

\begin{equation}\label{duzy}
\begin{aligned}
\int_s^\infty \frac{\varphi(r)}{r^{p+1}}\,dr
&\leq&
\int_s^\infty\frac{1}{r^p} \int_0^r\frac{\phi(x)}{x}\,dx\,dr \\
&\leq&
\int_s^\infty \frac{1}{r^p} \int_0^r\frac{\varphi(2x)}{x^2}\,dx\,dr \\
&\leq&
D\int_s^\infty \frac{1}{r^p} \int_0^r\frac{\varphi(x)}{x^2}\,dx\,dr,
\end{aligned}
\end{equation}
for some constant $D>0$, by inequality \eqref{zaleznosc} and the $\Delta_2$ condition.
Now we have
\begin{eqnarray*}
\int_s^\infty \frac{\varphi(r)}{r^{p+1}}\,dr
&\leq&
D\int_s^\infty \frac{1}{r^p} \int_0^r\frac{\varphi(x)}{x^2}\,dx\,dr \\
&=&
D\int_s^\infty \frac{1}{r^p} 
\left[\int_0^s\frac{\varphi(x)}{x^2}\,dx +\int_s^r\frac{\varphi(x)}{x^2}\,dx\right]
\,dr \\
&\leq&
\frac{D}{p-1}\left[\frac{\varphi(\sigma s)}{s^p}+\int_s^\infty\frac{\varphi(x)}{x^{p+1}}\,dx\right].
\end{eqnarray*}
We applied inequality \eqref{small} for the first integral and Fubini's theorem for the second. The last inequality can be rewritten in the form
$$\left(1-\frac{D}{p-1}\right)\int_s^\infty\frac{\varphi(r)}{r^{p+1}}\,dr
\leq
\frac{D}{p-1}\cdot\frac{\varphi(\sigma s)}{s^p}.$$

Now, choosing $p=2D$ we get
$$\int_s^\infty\frac{\varphi(r)}{r^{p+1}}\,dr
\leq
\frac{D}{D-1}\cdot\frac{\varphi(\sigma s)}{s^p}
\leq
\frac{\varphi\left(\frac{D\sigma s}{D-1}\right)}{s^p},$$
(we used the inequality $\varphi(\alpha t)\geq\alpha\varphi(t)$ for every convex, increasing function $\varphi$ such that $\varphi(0)=0$ and $\alpha\geq 1$ , $t\geq 0$). Then it is enough to take $\gamma=\frac{D\sigma}{D-1}$ to get inequality \eqref{big}.
\end{proof}

Before the proof of Theorem \ref{Hilbert_transform}, we would like to remind the reader that if the Hilbert transform is a bounded operator on the Orlicz space $L^\varphi$, then it is also a bounded operator on the dual space to $L^\varphi$, that is the Orlicz space $L^{\varphi^*}$. This follows from a duality argument (the proof of this fact is the same as for $L^p$ spaces, see for example \cite[Chapter II, \S 2.5, page 33]{Ste}). For more information about relations between mutually complementary (convex conjugate) $N$-functions $\varphi$ and $\varphi^*$, see for example \cite[Chapter I, \S 2, pages 11-14]{KrRu} or \cite[Chapter II, \S 2.1, pages 13-15]{RaRe}.

\begin{proof}[Proof of Theorem \ref{Hilbert_transform}]
\bigskip

$1.\implies 2.$

In the first part of the proof, we will work in the same way as the standard proof of the ordinary Marcinkiewicz interpolation theorem goes, but in the second part of the proof, we will use estimates \eqref{small} and \eqref{big}.

From Lemma \ref{estimate} we get the existence of constants $\sigma>0$, $\gamma>0$ and $p>1$ such that for every $s>0$ inequalities \eqref{small} and \eqref{big} are satisfied. Without loss of generality, we may assume that $\sigma\geq\gamma$. The Hilbert transform $H$ is of weak type $(1,1)$ and of weak type $(p,p)$, that is there exists a constant $C>0$ such that for every $t>0$ we have
$$\left|\{|Hf|>t\}\right|\leq C\frac{\left\Vert f\right\Vert_{L^1}}{t}$$
and
$$\left|\{|Hf|>t\}\right|\leq \left(C\frac{\left\Vert f\right\Vert_{L^p}}{t}\right)^p.$$

Now assume that 
$$\left\Vert f\right\Vert_{L^\varphi}\leq 1.$$ 
Then, by inequality \eqref{zaleznosc} we get
\begin{equation}\label{slaby_typ}
\begin{aligned}
\frac{1}{2\pi}\int_\mathbb{T}\varphi\left(\frac{|Hf(x)|}{\sigma}\right)\,dx
&=
\frac{1}{2\pi}\int_0^\infty\phi(t)\left|\left\{\frac{|Hf(x)|}{\sigma}>t\right\}\right|\,dt \\
&\leq
\frac{1}{2\pi}\int_0^\infty\frac{\varphi(2t)}{t}\left|\left\{|Hf(x)|>\sigma t\right\}\right|\,dt, \\
&=
\frac{1}{2\pi}\int_0^\infty\frac{\varphi(t)}{t}\left|\left\{|Hf(x)|>\sigma t/2\right\}\right|\,dt, \\
\end{aligned}
\end{equation}
Now define
 \begin{equation*}
    f^\nu(x) =
    \begin{cases}
      f(x) & \mbox{if } |f(x)|\geq\nu \\
      0        & \mbox{otherwise}
    \end{cases}
 \end{equation*}
and 
$$f_\nu(x)=f(x)-f^\nu(x).$$
Let us take $\nu=\sigma t$. Then we have
\begin{eqnarray*}
\left|\left\{|Hf(x)|>\sigma t/2\right\}\right|
&\leq&
\left|\left\{|Hf^{\sigma t}(x)|>\sigma t/4\right\}\right| + \left|\left\{|Hf_{\sigma t}(x)|>\sigma t/4\right\}\right| \\
&\leq&
4C\frac{\left\Vert f^{\sigma t}\right\Vert_{L^1}}{\sigma t} + 4^pC^p\frac{\left\Vert f_{\sigma t}\right\Vert_{L^p}^p}{\sigma^pt^p}.
\end{eqnarray*}

Combining the above estimates with inequality \eqref{slaby_typ}, we get
\begin{equation}\label{raz}
\frac{1}{2\pi}\int_\mathbb{T}\varphi\left(\frac{|Hf(x)|}{\sigma}\right)\,dx
\leq
\frac{1}{2\pi}\int_0^\infty\frac{\varphi(t)}{t}\left[4C\frac{\left\Vert f^{\sigma t}\right\Vert_{L^1}}{\sigma t} + 4^pC^p\frac{\left\Vert f_{\sigma t}\right\Vert_{L^p}^p}{\sigma^pt^p}\right]\,dt.
\end{equation}

By Fubini's theorem, inequality \eqref{small} and the fact that $\left\Vert f\right\Vert_{L^\varphi}\leq 1$, we have (here $\chi_A$ denotes the characteristic function of the set $A$)
\begin{equation}\label{dwa}
\begin{aligned}
\int_0^\infty\frac{\varphi(t)}{t^2} \frac{\left\Vert f^{\sigma t}\right\Vert_{L^1}}{\sigma}\,dt
&=
\frac{1}{2\pi}\int_0^\infty\frac{\varphi(t)}{t^2}\frac{1}{\sigma}\int_\mathbb{T}|f(x)|\chi_{\{y: |f(y)|\geq \sigma t\}}(x)\,dx\,dt, \\
&=
\frac{1}{2\pi}\int_\mathbb{T}\frac{|f(x)|}{\sigma}\int_0^\infty\frac{\varphi(t)}{t^2}\chi_{\{y: |f(y)|\geq \sigma t\}}(x)\,dt\,dx, \\
&=
\frac{1}{2\pi}\int_\mathbb{T}\frac{|f(x)|}{\sigma}\int_0^\frac{|f(x)|}{\sigma}\frac{\varphi(t)}{t^2}\,dt\,dx, \\
&\leq
\frac{1}{2\pi}\int_\mathbb{T}\frac{|f(x)|}{\sigma}\frac{\varphi\left(\sigma\frac{|f(x)|}{\sigma}\right)}{\frac{|f(x)|}{\sigma}}\,dx, \\
&=
\frac{1}{2\pi}\int_\mathbb{T} \varphi\left(|f(x)|\right) \,dx 
\leq
1.
\end{aligned}
\end{equation}

Similarly, by Fubini's theorem, inequality \eqref{big}, the assumption that $\gamma\leq\sigma$ and the inequality $\left\Vert f\right\Vert_{L^\varphi}\leq 1$ we have
\begin{equation}\label{trzy}
\begin{aligned}
\int_0^\infty\frac{\varphi(t)}{t^{p+1}} \frac{\left\Vert f_{\sigma t}\right\Vert_{L^p}^p}{\sigma^p}\,dt
&=
\frac{1}{2\pi}\int_0^\infty\frac{\varphi(t)}{t^{p+1}}\frac{1}{\sigma^p}\int_\mathbb{T}|f(x)|^p\chi_{\{y: |f(y)|< \sigma t\}}(x)\,dx\,dt, \\
&=
\frac{1}{2\pi}\int_\mathbb{T}\frac{|f(x)|^p}{\sigma^p}\int_0^\infty\frac{\varphi(t)}{t^{p+1}}\chi_{\{y: |f(y)|< \sigma t\}}(x)\,dt\,dx, \\
&=
\frac{1}{2\pi}\int_\mathbb{T}\frac{|f(x)|^p}{\sigma^p}\int_\frac{|f(x)|}{\sigma}^\infty\frac{\varphi(t)}{t^{p+1}}\,dt\,dx, \\
&\leq
\frac{1}{2\pi}\int_\mathbb{T}\frac{|f(x)|^p}{\sigma^p}\frac{\varphi\left(\gamma\frac{|f(x)|}{\sigma}\right)}{\frac{|f(x)|^p}{\sigma^p}}\,dt\,dx, \\
&\leq
\frac{1}{2\pi}\int_\mathbb{T} \varphi\left(|f(x)|\right) \,dx\leq
1.
\end{aligned}
\end{equation}

Putting estimates \eqref{raz}, \eqref{dwa} and \eqref{trzy} together, we get
$$
\frac{1}{2\pi}\int_\mathbb{T}\varphi\left(\frac{|Hf(x)|}{\frac{4C+(4C)^p}{2\pi}\sigma}\right)\,dx
\leq
\frac{1}{\frac{4C+(4C)^p}{2\pi}}\frac{1}{2\pi}\int_\mathbb{T}\varphi\left(\frac{|Hf(x)|}{\sigma}\right)\,dx 
\leq
1,
$$
by the convexity of the function $\varphi$ and the fact that $\frac{4C+(4C)^p}{2\pi}>1$. This means that the norm of the Hilbert transform as an operator from the Orlicz space $L^\varphi$ to the Orlicz space $L^\varphi$ is bounded by the constant
$$\frac{4C+(4C)^p}{2\pi}\sigma.$$\\

$2.\implies 1.$

In fact, we will repeat the proof from Zygmund's book for the $L^p$ spaces, $p\in(1,\infty)$, see \cite[page 29]{Zyg}. Let $f$ be a trigonometric polynomial of degree $n$ and $g$ be a function from the space $L^{\varphi^*}$. Let $D_n(g)$ denotes the projection of the function $g$ onto the space of trigonometric polynomial of degree $n$. By \eqref{rzut_norma}, it is a bounded operator on the space $L^{\varphi^*}$ because the Hilbert transform is a bounded operator on this space. Now we can write 
\begin{eqnarray*}
\left|\frac{1}{2\pi}\int_\mathbb{T} f(t) g(t)\, dt\right|
&=&
\left|\frac{1}{2\pi}\int_\mathbb{T} f(t) D_n(g)(t)\, dt\right| \\
&=&
\left|\widehat{\left(f D_n(g)\right)}(0)\right| \\
&=&
\left|\frac{1}{2n+1}\sum_{k=-n}^n (fD_n(g))\left(x_{n,k}\right)\right| \\
&\leq&
\left\Vert fD_n(g)\right\Vert_{L^1(\omega_n)} \\
&\leq&
2\left\Vert f\right\Vert_{L^\varphi(\omega_n)} 
\left\Vert D_n(g)\right\Vert_{L^{\varphi^*}(\omega_n)},
\end{eqnarray*}
by H\"older's inequality (see \cite[Chapter II, \S9.7 The Luxemburg norm, pages 78-80]{KrRu}), and further
\begin{eqnarray*}
\left|\frac{1}{2\pi}\int_\mathbb{T} f(t) g(t)\, dt\right|
&\leq&
2\left\Vert f\right\Vert_{L^\varphi(\omega_n)} 
\left\Vert D_n(g)\right\Vert_{L^{\varphi^*}(\omega_n)} \\
&\leq&
2\left\Vert f\right\Vert_{L^\varphi(\omega_n)} 
3\left\Vert D_n(g)\right\Vert_{L^{\varphi^*}} \\
&\lesssim&
\left\Vert f\right\Vert_{L^\varphi(\omega_n)} 
\left\Vert g\right\Vert_{L^{\varphi^*}},
\end{eqnarray*}
by Theorem \ref{Simple} and the boundedness of the projection operator on the space $L^{\varphi^*}$. Taking the supremum over functions $g$ such that $\left\Vert g\right\Vert_{L^{\varphi^*}}\leq 1$ we get
\begin{eqnarray*}
\left\Vert f\right\Vert_{L^\varphi}
&=&
\sup\left\{\int_{\mathbb{T}}f(t)g(t)\,dt: g\in L^{\varphi^*}, \left\Vert g\right\Vert_{L^\varphi}\leq 1\right\} \\
&\lesssim&
\left\Vert f\right\Vert_{L^\varphi(\omega_n)}, 
\end{eqnarray*}
and the constant depends only on the norm of the projection operator on the appropriate Orlicz space. 
\end{proof}

\begin{remark}
In notes \cite[Chapter 11, Theorem 11.8, pages 90 and 91]{Ma1} it is proven that the following conditions are equivalent:
\begin{itemize}
\item there exist constants $\sigma>0$, $\gamma>0$ and $p>1$ such that for every $s>0$ we have
$$
\frac{\sigma s}{\varphi(\sigma s)}\int_0^s\frac{\varphi(r)}{r^2}\,dr\leq \sigma,
$$
and 
$$
\frac{(\gamma s)^p}{\varphi(\gamma s)}\int_s^\infty\frac{\varphi(r)}{r^{p+1}}\,dr\leq \gamma^p.
$$
\item There exists a real number $p>1$ such that the Matuszewska-Orlicz indices
$$\alpha_\varphi=\lim_{t\rightarrow 0^+}\frac{\log\left(\sup_{s>0}\frac{\varphi(ts)}{\varphi(s)}\right)}{\log(t)}
=\lim_{t\rightarrow +\infty}\frac{\log\left(\inf_{s>0}\frac{\varphi(ts)}{\varphi(s)}\right)}{\log(t)}$$
and
$$\beta_\varphi=\lim_{t\rightarrow +\infty}\frac{\log\left(\sup_{s>0}\frac{\varphi(ts)}{\varphi(s)}\right)}{\log(t)}$$
satisfy inequalities
$$1<\alpha_\varphi\leq\beta_\varphi<p.$$
\end{itemize}
\end{remark}

\section{Comparison of $\ell_n^\varphi$ and $L^\varphi(\omega_n)$ and the proof of Theorem \ref{glowne}}
In this section we first prove an auxiliary theorem, and then we show that Theorem 1 follows directly from it and from previously proved theorems.
\begin{theorem}\label{porownanie}
\begin{enumerate}
\item
Let the $N$-function $\varphi$ satisfy the normalization condition $\varphi(1)=1$ and
$$\varphi(a)\varphi(b)\leq\varphi(Cab)$$ 
for some $C>1$ and every $a<1\leq ab<b$ ($\varphi$ is restricted supermultiplicative).
Then for every trigonometric polynomial $f_n$ of degree $n$ we have
\begin{equation}
\left\Vert f\right\Vert_{\ell_n^\varphi}
\leq
2C^2\varphi^{-1}(2n+1)\left\Vert f\right\Vert_{L^\varphi(\omega_n)}.
\end{equation}
\item
Let the $N$-function $\varphi$ satisfy the normalization condition $\varphi(1)=1$ and
$$\varphi(Cab)\leq\varphi(a)\varphi(b)$$ 
for some $1>C>0$ and $a<1\leq ab<b$.
Then for every trigonometric polynomial $f_n$ of degree $n$ we have
\begin{equation}
C\varphi^{-1}(2n+1)\left\Vert f\right\Vert_{L^\varphi(\omega_n)}
\leq
2\left\Vert f\right\Vert_{\ell_n^\varphi}.
\end{equation}
\end{enumerate}
\end{theorem}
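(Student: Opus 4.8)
The plan is to prove Theorem~\ref{porownanie} directly, using positive homogeneity of the Luxemburg norms to normalise the scaling and then reducing each inequality to a pointwise estimate on $\varphi$ that is summed over the $2n+1$ nodes $x_{n,k}$. Put $N=2n+1$, $b_k=|f_n(x_{n,k})|$ and $\Lambda=\varphi^{-1}(N)\ (\ge\varphi^{-1}(1)=1)$. I would first record the ``inverse forms'' of the hypotheses: substituting $a=\varphi^{-1}(x)$, $b=\varphi^{-1}(y)$ turns restricted supermultiplicativity into $\varphi^{-1}(xy)\le C\,\varphi^{-1}(x)\varphi^{-1}(y)$ whenever $x<1$ and $\varphi^{-1}(x)\varphi^{-1}(y)\ge1$, and restricted submultiplicativity into $\varphi^{-1}(x)\varphi^{-1}(y)\le C^{-1}\varphi^{-1}(xy)$ in the same range. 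Combined with $\varphi^{-1}(1)=1$ and concavity of $\varphi^{-1}$ (which gives $\varphi^{-1}(\lambda t)\le\lambda\varphi^{-1}(t)$ for $\lambda\ge1$) these yield the two scalar estimates that will absorb the ``small'' nodes: under restricted submultiplicativity $\varphi^{-1}(N)\,\varphi^{-1}(1/N)\le 1/C$ (either the product already lies below $1$, or the inverse inequality with $x=1/N$, $y=N$ applies), and under restricted supermultiplicativity $\varphi^{-1}(N)\,\varphi^{-1}(1/N)\ge 1/\varphi(C)$ (apply supermultiplicativity at $a=\varphi^{-1}(1/N)$, $b=1/a$ and use $\varphi^{-1}(N\varphi(C))\le\varphi(C)\varphi^{-1}(N)$).

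For part~(2) I normalise $\|f_n\|_{\ell_n^\varphi}\le1$, so that $\sum_k\varphi(b_k)\le1$ and, apart from the trivial configuration where one node carries all the mass, every $b_k<1$; the claim becomes $\sum_k\varphi\!\big(\tfrac C2\Lambda b_k\big)\le N$. Split the nodes into $B=\{k:\ b_k\ge 1/\Lambda\}$ and its complement. On $B$ the pair $(b_k,\Lambda)$ is admissible for restricted submultiplicativity ($b_k<1$, $b_k\Lambda\ge1$), so $\varphi(C\Lambda b_k)\le\varphi(b_k)\varphi(\Lambda)=N\varphi(b_k)$, and convexity gives $\varphi(\tfrac C2\Lambda b_k)\le\tfrac12N\varphi(b_k)$; summing, the $B$-part is $\le\tfrac N2$. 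On the complement $\tfrac C2\Lambda b_k<\tfrac C2<1$, so $\varphi(\tfrac C2\Lambda b_k)\le\tfrac C2\Lambda b_k$; summing and applying Jensen's inequality to the concave $\varphi^{-1}$ bounds $\sum_k b_k$ by $N\varphi^{-1}(1/N)$, whence this part is $\le\tfrac C2\Lambda\,N\varphi^{-1}(1/N)=\tfrac N2\,C\,\varphi^{-1}(N)\varphi^{-1}(1/N)\le\tfrac N2$ by the scalar estimate. Adding the two halves proves part~(2).

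Part~(1) follows the same template (or, alternatively, can be deduced from part~(2) applied to the conjugate $N$-function $\varphi^{*}$ via a duality argument, since $(\ell_n^{\varphi})^{*}=\ell_n^{\varphi^{*}}$, $(L^\varphi(\omega_n))^{*}=L^{\varphi^{*}}(\omega_n)$ and $\varphi^{-1}(N)\,(\varphi^{*})^{-1}(N)\in[N,2N]$). Directly: normalise $\|f_n\|_{L^\varphi(\omega_n)}\le1$, so $\sum_k\varphi(b_k)\le N$ and hence $b_k\le\Lambda$ for all $k$; the target is $\sum_k\varphi\!\big(\tfrac{b_k}{2C^{2}\Lambda}\big)\le1$. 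For the nodes with $b_k\ge C$ the pair $\big(\tfrac{b_k}{C\Lambda},\Lambda\big)$ is admissible for restricted supermultiplicativity ($\tfrac{b_k}{C\Lambda}<1$ since $b_k\le\Lambda$, and $\tfrac{b_k}{C}\ge1$), so $\varphi\!\big(\tfrac{b_k}{C\Lambda}\big)\le\varphi(b_k)/N$, which convexity sharpens to $\varphi\!\big(\tfrac{b_k}{2C^{2}\Lambda}\big)\le\tfrac1{2N}\varphi(b_k)$; this part of the sum is $\le\tfrac12$. The nodes with $b_k<C$ are the delicate ones: here $\tfrac{b_k}{2C^{2}\Lambda}<1$, and the crude cut-off at $C$ together with the chord bound $\varphi(t)\le t$ is too wasteful, so one must instead keep the pointwise estimate $\varphi\!\big(\tfrac{b_k}{2C^{2}\Lambda}\big)\le\tfrac1N\varphi(b_k)$ alive there too — this amounts to proving $\varphi(a)\varphi(b)\le\varphi(2C^{2}ab)$ for \emph{all} $0<a\le\tfrac1{2C^{2}}$, $b\ge1$, which for $2C^{2}ab\ge C$ follows from supermultiplicativity at $(a,\mu b)$ with a suitable $\mu>1$, and for $2C^{2}ab\in[1,C)$ requires combining convexity, concavity of $\varphi^{-1}$ and the lower scalar estimate $\varphi^{-1}(N)\varphi^{-1}(1/N)\ge1/\varphi(C)$. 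Once that is in place, summing gives $\le1$.

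The step I expect to be the genuine obstacle is exactly this last one. The restricted multiplicativity hypotheses are usable only when the product of their two arguments is at least $1$, and that is precisely the regime one loses at the sampling nodes where $|f_n|$ is small; extending the pointwise inequality $\varphi(a)\varphi(b)\le\varphi(2C^{2}ab)$ past $ab=1$ is where convexity of $\varphi$, concavity of $\varphi^{-1}$ and several applications of the hypothesis have to be used together, and it is also where the precise constants ($2C^{2}$ in part~(1) and $C/2$ in part~(2)) are produced. Once Theorem~\ref{porownanie} is proved, Theorem~\ref{glowne} is immediate: part~(1) follows by composing Theorem~\ref{porownanie}(1) with Theorem~\ref{Simple}, and part~(2) by composing Theorem~\ref{porownanie}(2) with the implication $2.\!\Rightarrow\!1.$ of Theorem~\ref{Hilbert_transform}.
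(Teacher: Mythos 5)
Your decomposition of the nodes into ``big'' and ``small'' sets with thresholds $b_k\ge C$ (Part 1) and $b_k\ge 1/\Lambda$ (Part 2) matches the paper's $B_n/S_n$ split exactly, and your Part~(2) is essentially correct: the chord bound $\varphi(t)\le t$ plus Jensen for the concave $\varphi^{-1}$ does the job on the small nodes, though the paper's treatment there is simpler still—it bounds each $|f(x_{n,k})|$ by the maximum $\|f\|_{\ell_n^\varphi}/\varphi^{-1}(2n+1)$, uses that there are at most $2n+1$ terms, and needs only $\varphi(1)=1$, no scalar product estimate. (The paper also keeps the two pieces separate at the level of the Luxemburg norm via the triangle inequality $\|f\|\le\|f\chi_{B_n}\|+\|f\chi_{S_n}\|$ rather than tracking the modular $\sum\varphi(\cdot)$ directly, which avoids some of your bookkeeping.)

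In Part~(1) the gap is genuine and it sits exactly where you flag it, but the fix you propose is the wrong move. You try to extend the pointwise estimate $\varphi(a)\varphi(b)\le\varphi(2C^2ab)$ to $ab<1$; the restricted hypothesis says nothing about that regime, and the extension is neither forced nor needed. For the small nodes one should not invoke multiplicativity pointwise at all. Instead do what the paper does on $S_n$: use only monotonicity of $\varphi$ to replace each $b_k$ (which is $<C$) by $C$, so in your modular language
$$\sum_{k:\,b_k<C}\varphi\Bigl(\frac{b_k}{2C^2\Lambda}\Bigr)\le(2n+1)\,\varphi\Bigl(\frac{1}{2C\Lambda}\Bigr),$$
and then the scalar estimate $C\,\varphi^{-1}(2n+1)\,\varphi^{-1}\bigl(\tfrac{1}{2n+1}\bigr)\ge 1$ together with concavity of $\varphi^{-1}$ gives the bound $\le\tfrac12$. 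Equivalently, at the norm level, $\|f\chi_{S_n}\|_{\ell_n^\varphi}\le C\|f\|_{L^\varphi(\omega_n)}/\varphi^{-1}(1/(2n+1))\le C^2\varphi^{-1}(2n+1)\|f\|_{L^\varphi(\omega_n)}$. One further caveat: the lower scalar estimate you actually derive, $\varphi^{-1}(N)\varphi^{-1}(1/N)\ge 1/\varphi(C)$, is weaker than the $\ge 1/C$ the paper states (which is obtained by plugging $a=\varphi^{-1}(1/N)$, $b=\varphi^{-1}(N)$ directly into the hypothesis and tacitly using $ab\ge 1$); your concern there is legitimate but only affects the numerical constant, not the shape of the argument.
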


\begin{proof}

Part 1).
We consider two cases. Let us define
$$B_n=\left\{k\in\mathbb{Z}: \frac{|f(x_{n,k})|}{||f||_{L^\varphi(\omega_n)}}\geq C, -n\leq k\leq n\right\},$$
and
$$S_n=\left(\mathbb{Z}\cap[-n,n]\right)\setminus B_n.$$

For the set $B_n$ we will use the assumption: $\varphi(a)\varphi(b)\leq\varphi(Cab)$ for $a<1\leq ab<b$. Substituting 
$$p=Cab, q=b,$$
we can rewrite restricted supermultiplicativity condition in the form 
\begin{equation}
\label{nier_mult}
\varphi\big(\frac{p}{Cq}\big)\leq\frac{\varphi(p)}{\varphi(q)},
\end{equation}
whenever
\begin{equation}
\label{nier_mult_warunki}
\frac{p}{Cq}<1\leq \frac{p}{C}<q.
\end{equation}
We set 
$$p=\frac{|f(x_{n,k})|}{||f||_{L^\varphi(\omega_n)}} \mbox{ and } q=\varphi^{-1}(2n+1).$$

First, we check that the above $p$ and $q$ satisfies \eqref{nier_mult_warunki}.
For $k=-n, -n+1, ..., n-1,\\ n$ we have by the definition of the $L^\varphi(\omega_n)$ norm
$$
\frac{1}{2n+1}\varphi\left(\frac{|f(x_{n,k})|}{||f||_{L^\varphi(\omega_n)}}\right)
\leq
\frac{1}{2n+1}\sum_{l=-n}^n\varphi\left(\frac{|f(x_{n,l})|}{||f||_{L^\varphi(\omega_n)}}\right)
\leq
1.
$$
This can be rewritten in the form:
$$\frac{|f(x_{n,k})|}{||f||_{L^\varphi(\omega_n)}}\leq\varphi^{-1}(2n+1),$$
which confirms \eqref{nier_mult_warunki}.

By \eqref{nier_mult} we get for $p=\frac{|f(x_{n,k})|}{||f||_{L^\varphi(\omega_n)}} \mbox{ and } q=\varphi^{-1}(2n+1)$ that
$$
\varphi\left(\frac{\left|f(x_{n,k})\right|}{C\varphi^{-1}(2n+1)||f||_{L^\varphi(\omega_n)}}\right)
\leq
\frac{\varphi\left(\frac{\left|f(x_{n,k})\right|}{||f||_{L^\varphi(\omega_n)}}\right)}{\varphi(\varphi^{-1}(2n+1))}.
$$
Summing the above inequalities over $k\in B_n$ we get
\begin{eqnarray*}
\sum_{k\in B_n}\varphi\left(\frac{\left|f(x_{n,k})\right|}{C\varphi^{-1}(2n+1)||f||_{L^\varphi(\omega_n)}}\right)
&\leq&
\frac{1}{2n+1}\sum_{k\in B_n}\varphi\left(\frac{\left|f(x_{n,k})\right|}{||f||_{L^\varphi(\omega_n)}}\right) \\
&\leq&
\frac{1}{2n+1}\sum_{k=-n}^n\varphi\left(\frac{\left|f(x_{n,k})\right|}{||f||_{L^\varphi(\omega_n)}}\right) \\
&\leq&
1,
\end{eqnarray*}
by the definition of the norm $||f||_{L^\varphi(\omega_n)}$. The above calculation means that
\begin{equation}\label{Bn}
\begin{aligned}
||f\chi_{B_n}||_{l_n^\varphi}
&=
\inf\Big\{\lambda>0: \sum_{k\in B_n} \varphi\left(\frac{|f(x_{n,k})|}{\lambda}\right)\leq 1\Big\} \\
&\leq
C\varphi^{-1}(2n+1)||f||_{L^\varphi(\omega_n)}.
\end{aligned}
\end{equation}

Now we consider the set $S_n$. By its definition we get
\begin{equation}\label{Sn}
\begin{aligned}
||f\chi_{S_n}||_{\ell_n^\varphi}
&=
\inf\Big\{\lambda>0: \sum_{k\in S_n} \varphi\left(\frac{|f(x_{n,k})|}{\lambda}\right)\leq 1\Big\} \\
&\leq
\inf\Big\{\lambda>0: \sum_{k\in S_n} \varphi\left(\frac{C||f||_{L^\varphi(\omega_n)}}{\lambda}\right)\leq 1\Big\} \\
&\leq
\inf\Big\{\lambda>0: (2n+1)\varphi\left(\frac{C||f||_{L^\varphi(\omega_n)}}{\lambda}\right)\leq 1\Big\} \\
&=
\inf\Big\{\lambda>0: \frac{C||f||_{L^\varphi(\omega_n)}}{\varphi^{-1}\left(\frac{1}{2n+1}\right)}\leq \lambda\Big\} \\
&=
C\frac{||f||_{L^\varphi(\omega_n)}}{\varphi^{-1}\left(\frac{1}{2n+1}\right)} \\
&\leq
C^2\varphi^{-1}(2n+1) ||f||_{L^\varphi(\omega_n)}.
\end{aligned}
\end{equation}
In the last inequality, we used the fact that $1\leq C\varphi^{-1}(1/x)\cdot\varphi^{-1}(x)$ for $x\geq 1$. 
It is a consequence of assumptions: $\varphi(a)\varphi(b)\leq\varphi(Cab)$, $a<1\leq ab<b$, and $\varphi(1)=1$ if we take
$$a=\varphi^{-1}\left(\frac{1}{x}\right) \mbox{ and } b=\varphi^{-1}(x),$$
for $x\geq 1$.

Combining \eqref{Bn} and \eqref{Sn} we have
\begin{eqnarray*}
||f||_{l_n^\varphi}
&\leq&
||f\chi_{B_n}||_{l_n^\varphi}+||f\chi_{S_n}||_{l_n^\varphi} \\
&\leq&
C\varphi^{-1}(2n+1)||f||_{L^\varphi(\omega_n)}+C^2\varphi^{-1}(2n+1)||f||_{L^\varphi(\omega_n)} \\
&=&
2C^2\varphi^{-1}(2n+1)||f||_{L^\varphi(\omega_n)}.
\end{eqnarray*}

Part 2).
Here, we also need to consider two cases. Let us define
$$B_n=\{k\in\mathbb{Z}: \varphi^{-1}(2n+1)\frac{|f(x_{n,k})|}{||f||_{\ell_n^\varphi}}\geq 1, -n\leq k\leq n\},$$
and
$$S_n=\left(\mathbb{Z}\cap[-n,n]\right)\setminus B_n.$$

For the set $B_n$ we will use the assumption: 
\begin{equation}
\label{war41}
\frac{\varphi(Cab)}{\varphi(b)}\leq\varphi(a)
\end{equation}
whenever
\begin{equation}
\label{war42}
a<1\leq ab<b.
\end{equation}
We set 
$$a=\frac{|f(x_{n,k})|}{||f||_{\ell_n^\varphi}} \mbox{ and } b=\varphi^{-1}(2n+1),$$
First, we check that the above $a$ and $b$ satisfy \eqref{war41}.
For $k=-n, -n+1, ..., n-1, n$ we have by the definition of the $l_n^\varphi$ norm
$$
\varphi\left(\frac{|f(x_{n,k})|}{||f||_{\ell_n^\varphi}}\right)
<
\sum_{l=-n}^n\varphi\left(\frac{|f(x_{n,l})|}{||f||_{\ell_n^\varphi}}\right)
\leq
1.
$$
This inequality can be rewritten in the form:
$$\frac{|f(x_{n,k})|}{||f||_{\ell_n^\varphi}}<1.$$
If there is only one non-zero $x_{n,k}$, $k=-n, ..., n$, then we can use directly the definitions of $\lVert\cdot\rVert_{L^\varphi(\omega_n)}$ and 
$\lVert\cdot\rVert_{\ell_n^\varphi}$ norms to get the desired estimate.

By \eqref{war41} we have for $a=\frac{|f(x_{n,k})|}{||f||_{\ell_n^\varphi}} \mbox{ and } b=\varphi^{-1}(2n+1)$ that 
$$
\frac{\varphi\left(C\varphi^{-1}(2n+1)\frac{\left|f(x_{n,k})\right|}{||f||_{\ell_n^\varphi}}\right)}{\varphi\left(\varphi^{-1}(2n+1)\right)}
\leq
\varphi\left(\frac{\left|f(x_{n,k})\right|}{||f||_{\ell_n^\varphi}}\right).$$
Summing over $k\in B_n$ the above inequalities we get
\begin{eqnarray*}
\frac{1}{2n+1}\sum_{k\in B_n}\varphi\left(C\varphi^{-1}(2n+1)\frac{\left|f(x_{n,k})\right|}{||f||_{\ell_n^\varphi}}\right)
&=&
\sum_{k\in B_n}\frac{\varphi\left(C\varphi^{-1}(2n+1)\frac{\left|f(x_{n,k})\right|}{||f||_{\ell_n^\varphi}}\right)}{\varphi\left(\varphi^{-1}(2n+1)\right)} \\
&\leq&
\sum_{k\in B_n}\varphi\left(\frac{\left|f(x_{n,k})\right|}{||f||_{\ell_n^\varphi}}\right) \\
&\leq&
\sum_{k=-n}^n\varphi\left(\frac{\left|f(x_{n,k})\right|}{||f||_{\ell_n^\varphi}}\right) \\
&\leq&
1,
\end{eqnarray*}
by the definition of the norm $||f||_{\ell_n^\varphi}$. We get 
\begin{equation}\label{Bn2}
\begin{aligned}
||f\chi_{B_n}||_{L^\varphi(\omega_n)}
&=
\inf\Big\{\lambda>0: \frac{1}{2n+1}\sum_{k\in B_n} \varphi\left(\frac{|f(x_{n,k})|}{\lambda}\right)\leq 1\Big\} \\
&\leq
\frac{||f||_{\ell_n^\varphi}}{C\varphi^{-1}(2n+1)}.
\end{aligned}
\end{equation}

Now look at the set $S_n$. We use its definition to get
\begin{equation}\label{Sn2}
\begin{aligned}
||f\chi_{S_n}||_{L^\varphi(\omega_n)}
&=
\inf\Big\{\lambda>0: \frac{1}{2n+1}\sum_{k\in S_n} \varphi\left(\frac{|f(x_{n,k})|}{\lambda}\right)\leq 1\Big\} \\
&\leq
\inf\Big\{\lambda>0: \frac{1}{2n+1}\sum_{k\in S_n} \varphi\left(\frac{||f||_{\ell_n^\varphi}}{\varphi^{-1}(2n+1)\lambda}\right)\leq 1\Big\} \\
&\leq
\inf\Big\{\lambda>0: \varphi\left(\frac{||f||_{\ell_n^\varphi}}{\varphi^{-1}(2n+1)\lambda}\right)\leq 1\Big\} \\
&\leq
\inf\Big\{\lambda>0: \frac{||f||_{\ell_n^\varphi}}{\varphi^{-1}(2n+1)\varphi^{-1}(1)}\leq \lambda\Big\} \\
&=
\frac{||f||_{\ell_n^\varphi}}{\varphi^{-1}(2n+1)},
\end{aligned}
\end{equation}
because $\varphi(1)=1$.

Combining \eqref{Bn2} and \eqref{Sn2} we have
\begin{eqnarray*}
||f||_{L^\varphi(\omega_n)}
&\leq&
||f\chi_{B_n}||_{L^\varphi(\omega_n)}+||f\chi_{S_n}||_{L^\varphi(\omega_n)} \\
&\leq&
\frac{1}{C\varphi^{-1}(2n+1)} ||f||_{\ell_n^\varphi}+\frac{1}{C\varphi^{-1}(2n+1)} ||f||_{\ell_n^\varphi} \\
&=&
\frac{2}{C\varphi^{-1}(2n+1)} ||f||_{\ell_n^\varphi}.
\end{eqnarray*}
The proof is complete.
\end{proof}

Now we are ready to prove Theorem \ref{glowne}.

\begin{proof}[Proof of Theorem \ref{glowne}.]
Point 1 follows from Theorem \ref{Simple} and point 1 of Theorem \ref{porownanie} while point 2 follows from Theorem \ref{Hilbert_transform} and point 2 of Theorem \ref{porownanie}.
\end{proof}

\section{Final remarks}
 
1) The sufficiency of the restricted supermultiplicativity condition is in fact not very surprising. For $N$-functions satisfying the $\Delta_2$ condition it is also necessary for the sampling theorem (Theorem \ref{porownanie}) to hold. Indeed, let
$f_{n,k}$ be a trigonometric polynomial of degree $2n+1$ taking value 1 at $k$ points from the set $\{x_{n,k}\}$ and 0 at remaining $2n-k+1$ points. By the definition of  Orlicz norms, we get

$$
k\cdot\varphi\left(\frac1{\|f_{n,k}\|_{\ell^\varphi_n}}\right)=1
$$
and
$$
\frac{k}{2n+1}\cdot\varphi\left(\frac1{\|f_{n,k}\|_{L^\phi(\omega_n)}}\right)=1.
$$
These are equivalent to
$$
\|f_{n,k}\|_{\ell^\varphi_n}=\frac{1}{\varphi^{-1}\left(\frac1k\right)}
$$
and
$$
\|f_{n,k}\|_{L^\varphi(\omega_n)}=\frac{1}{\varphi^{-1}\left(\frac{2n+1}k\right)}.
$$
If the sampling inequality is satisfied we get
$$
\frac{1}{\varphi^{-1}\left(\frac1k\right)}\leq C \varphi^{-1}(2n+1) \frac{1}{\varphi^{-1}\left(\frac{2n+1}k\right)},
$$
which is equivalent to
$$
\varphi^{-1}\left(\frac{2n+1}k\right)\leq C \varphi^{-1}\left(\frac1k\right)\varphi^{-1}(2n+1).
$$

Setting now $a=\varphi^{-1}\left(\frac1k\right)$ and $b=\varphi^{-1}(2n+1)$ we get 
$$
\varphi(a)\varphi(b)\leq \varphi(Cab)
$$
and using $\Delta_2$ condition we can release the restrictions on the special form of $a$ and $b$ paying the price of the bigger constant.

A little bit more complicated argument may be used to show also the necessity of supermultiplicativity condition in Theorem \ref{glowne}.

\vskip3mm
2) It is well known that any $N$-function defined for arguments greater than 1 and any other $N$-function defined for arguments less then 1 could be "glued" together to some function equivalent to $N$-function defined on all positive arguments (i.e. there exists an $N$-function which coincides with the first one on some neighbourhood of infinity and with the second one in the vicinity of 0). Hence, given the $N$-function  $\Psi$ for arguments bigger then 1, one can ask the question what is the greatest $N$-function $\psi$
defined for small arguments such that for the  glued function the sampling inequality (Theorem 4.1) holds. In view of Theorem \ref{glowne} one gets immediately
$$
\psi(t)\leq C\inf_{x>t^{-1}}\frac{\Psi(tx)}{\Psi(x)}.
$$
It follows that the biggest $N$-function $\psi$ satisfying the sampling inequality for $\Psi$ coincides with the biggest convex function smaller then the function defined on the right hand side of the above inequality. We will call it \emph{a sampling function} of $\Psi$.
Note that for $\Psi$ satisfying $\Delta_2$ condition its sampling function is well defined and unique up to equivalence.
Since equivalent $N$-functions define isomorphic Orlicz spaces, then in this way we can uniquely (up to the isomorphism) assign 
to any Orlicz space with $\Delta_2$ property its sampling sequential Orlicz space. 
For example, simple calculations shows that for $N$-functions $x\mapsto x^\alpha\log^\beta(1+x)$ its sampling function is $t\mapsto t^\alpha\log^{-\beta}(1+t^{-1})$ if $\beta>0$ and $t\mapsto t^\alpha$ for $\alpha>1$ and $\beta<0$. For $N$-function
$x\mapsto x^\alpha\log^\beta(1+x)\log^\gamma(1+\log(1+x))$ its sampling function is $t\mapsto t^\alpha\log^{-\beta}(1+t^{-1})\log^{-\gamma}(1+\log(1+t^{-1}))$ if $\beta$ and $t\mapsto t^\alpha$ if $\alpha>1$ and
$\beta<0$. More generally, if $\eta(x)$ is a
concave function growing to infinity slower than any power function such that the function $y\mapsto y(\log \eta(y))'$ is decreasing, then the sampling function of the $N$-function $x\mapsto x^\alpha\cdot\eta(x)$ is $t\mapsto t^\alpha \eta(t^{-1})^{-1}$.
Surprisingly, as one can see from the above examples the sampling space for $N$-function logaritmically smaller than the power function is the same as for the power function itself. This means that in this case the original Marcinkiewicz theorem is just stronger.
Note that for an $N$-function without $\Delta_2$ property its sampling space obtained in this manner is always trivial.
\vskip3mm
3) One can proceed using similar considerations as above for the interpolation inequality (Theorem 4.2). In this case the $N$-function for $t<1$ has to satisfy
$$
\psi(t)\geq C\sup_{x>t^{-1}}\frac{\Psi(tx)}{\Psi(x)}
$$
While one can easily calculate those "interpolating functions" for the specific examples as above, it is however not clear if there always exists the unique up to equivalence the smallest convex majorant of the right hand side of the above inequality.
\section*{Acknowledgements}
The authors would like to express their gratitude to Karol Le\'snik for interesting and helpful discussions.

\end{document}